\numberwithin{equation}{section}
\numberwithin{figure}{section}
\theoremstyle{plain}
\newtheorem{thm}{\protect\theoremname}[section]
  \theoremstyle{definition}
  \newtheorem{rem}[thm]{Remark}
  \theoremstyle{plain}
  \newtheorem{lem}[thm]{\protect\lemmaname}
\newtheorem{corollary}[thm]{Corollary}
\newenvironment{customthm}[1]
{\innercustomthm}
{\endinnercustomthm}
\newcommand{\im}[1]{\displaystyle\int_{M}{#1}\,{\rm d}v_g}
\def\d{{{\rm d}}v_{g}}
\def\E{\mathcal{E}}
\newcommand{\R}{{\mathbb R}}
\newcommand{\ds}{\displaystyle}
\newcommand{\Pl}{(\mathscr{P}_\lambda)}
  \providecommand{\examplename}{Example}
  \providecommand{\lemmaname}{Lemma}
\providecommand{\theoremname}{Theorem}
\begin{document}

\title[A characterization of Schrödinger equations ]{A characterization  related to Schrödinger equations on
Riemannian manifolds}

\author{Francesca Faraci }

\email{ffaraci@dmi.unict.it}

\address{Department of Mathematics and Computer Science, University of Catania,
Catania, Italy}

\author{Csaba Farkas}

\email{farkas.csaba2008@gmail.com \& farkascs@ms.sapientia.ro}

\address{Department of Mathematics and Computer Science, Sapientia University,
Tg. Mures, Romania \& Institute of Applied Mathematics, Óbuda University,
1034 Budapest, Hungary}

\date{6 April 2017}
\begin{abstract}
In this paper we consider the following problem
\[
\left\{ \begin{array}{ll}
-\Delta_{g}u+V(x)u=\lambda\alpha(x)f(u), & \mbox{in }M\\
u\geq 0, & \mbox{in }M\\
u\to0, & \mbox{as }d_{g}(x_{0},x)\to\infty
\end{array}\right.\eqno{(\mathscr{P}_{\lambda})}
\]where $(M,g)$ is a $N$-dimensional ($N\geq3)$, non-compact Riemannian manifold with asymptotically non-negative Ricci curvature, $\lambda$ is a real parameter, $V$ is a positive coercive potential, $\alpha$ is a bounded function and $f$ is a suitable nonlinearity. By using variational methods we prove a characterization result for existence of solutions for $(\mathscr{P}_{\lambda})$.
\end{abstract}

\subjclass[2010]{35J20, 35J25}

\keywords{non-compact Riemannian manifold, Ricci curvature, Schrödinger equation,
discretness of the spectrum, elliptic equation, boundary value problem,
Nash-Moser iteration.}
\maketitle

\section{Introduction}

The existence of standing waves solutions for the nonlinear Schrödinger
equation
\[
i\hbar\frac{\partial\psi}{\partial t}=-\frac{\hbar^{2}}{2m}\Delta\psi+V(x)\psi-f(x,|\psi|),\ \ \ \text{\ensuremath{\mathrm{in}} }\mathbb{R}^{N}\times\mathbb{R}_{+}\setminus\{0\},
\]
has been intensively studied in the last decades.
The Schrödinger equation  plays a central role  in quantum mechanic as it predicts the future behavior of a dynamic system. Indeed, the wave function $\psi(x,t)$ represents the quantum mechanical
probability amplitude for a given unit-mass particle to have position
$x$ at time $t$. Such equation appears in  several fields of physics, from  Bose\textendash Einstein condensates
and nonlinear optics, to plasma physics (see for instance  \cite{ARMA2002, TAMS2008} and reference therein).

A Lyapunov-Schmidt type reduction, i.e. a separation of variables of the type $\psi(x,t)=u(x)e^{-i\frac{E}{\hbar}t}$, leads to the following semilinear
elliptic equation
\begin{equation*}
-\Delta u+V(x)u=f(x,u),\ \ \ \mbox{ in }\mathbb{R}^{N}.\label{eq:general}
\end{equation*}

With the aid of variational methods, the existence and multiplicity of nontrivial solutions for such problems have been extensively studied in the literature over the last decades. For instance, the existence of positive solutions  when the potential $V$ is coercive and $f$ satisfies standard mountain pass assumptions,  are well known after the seminal paper of Rabinowitz \cite{Rabinowitz92ZAMP}. Moreover, in the class of bounded from below potentials,  several attempts have been made to  find general assumptions  on $V$ in order to obtain existence and multiplicity
results (see for instance \cite{BartschPankoWang01CCM,BartschWang95CPDE,Benci-Fortunato-JMAA,Willem-book,Strauss}). In such papers the nonlinearity $f$ is required to satisfy the well-know Ambrosetti-Rabinowitz  condition, thus it is  superlinear
at infinity. For a sublinear growth  of $f$ see also  \cite{Kristaly-NODEA}.

Most of the  aforementioned papers provide \textit{sufficient} conditions on the nonlinear term $f$ in order to prove  existence/multiplicity type results.
The novelty of the present paper is to establish a \textit{characterization} result for stationary Schrödinger equations on unbounded domains; even more, our arguments work on not necessarily linear structures. Indeed, our results fit the research direction where the solutions of certain PDEs are influenced by the geometry of the ambient structure (see for instance \cite{FKVCalculus,FK16,Kristaly-DCDS2009,Kristaly12,LiYau,Ma} and reference therein). Accordingly, we deal with a Riemannian setting, the results on $\mathbb{R}^N$ being a particular consequence of our general achievements.

%Recently, an increasing interest towards the influence of the geometry of the domain on PDEs, has led many authors to focus on Schrödinger equations on manifolds of the type
%\begin{equation*}
%-\Delta_g u+V(x)u=f(x,u),\ \ \ \mbox{ in } M,
%\end{equation*}
%where $(M,g)$ is a $N$-dimensional ($N\geq3)$, non-compact Riemannian manifold (see \cite{LiYau}, \cite{Ma}).

%Our study fit within such setting: we will consider a Schrödinger equation on a certain class of manifolds and prove the equivalence between an existence result and suitable behaviour of the nonlinearity.

In order to give the precise statement of our result, let us denote by $(M,g)$  a $N$-dimensional ($N\geq3)$, complete, non-compact Riemannian manifold
with asymptotically non-negative Ricci curvature with a base point
$\tilde{x}_{0}\in M$, i.e.,
\begin{itemize}
\item[\textbf{(C)}]  ${\rm Ric}_{(M,g)}(x)\geq-(N-1)H(d_{g}(\tilde{x}_{0},x))$, for
all $x\in M,$ where $H\in C^{1}([0,\infty))$ is a non-negative bounded
function satisfying ${\displaystyle \int_{0}^{\infty}tH(t){\rm d}t=b_{0}<+\infty},$
\end{itemize}
(here and in the sequel $d_g$ is the distance function associated to the Riemannian metric $g$). For an overview on such property see \cite{Adriano-Xia2010, Pigola-Rigoli-Setti2008}.

Let  $x_{0}\in M$
be a fixed point,  $\alpha:M\to\mathbb{R}_{+}\setminus\{0\}$ a bounded function and  $f:\mathbb{R}_{+}\to\mathbb{R}_{+}$  a continuous
function with $f(0)=0$ such that there exist two constants $C>0$
and $q\in(1,2^{\star})$ (being $2^\star$ the Sobolev critical exponent) such that
\begin{equation}
f(\xi)\leq k\left(1+\xi^{q-1}\right)\ \mbox{for all}\ \xi\geq0.\label{growth2}
\end{equation}
Denote by $F:\mathbb{R}_{+}\to\mathbb{R}_{+}$ the function $\ds F(\xi)=\int_0^\xi f(t)dt$.

We assume that $V:M\to\mathbb{R}$ is a measurable function satisfying the following conditions:
\begin{itemize}
	\item[$(V_{1})$] ${\displaystyle V_0={\rm essinf}_{x\in M}V(x)>0}$;
	\item[$(V_{2})$] ${\displaystyle \lim_{d_{g}(x_{0},x)\to\infty}V(x)=+\infty,}$ for
	some $x_{0}\in M$.
\end{itemize}

The problem we deal with is written as:

\[
\left\{ \begin{array}{ll}
-\Delta_{g}u+V(x)u=\lambda\alpha(x)f(u), & \mbox{in }M\\
u\geq 0, & \mbox{in }M\\
u\to0, & \mbox{as }d_{g}(x_{0},x)\to\infty.
\end{array}\right.\eqno{(\mathscr{P}_{\lambda})}
\]

Our result reads as follows:
\begin{thm}
\label{thm:main} Let $N\geq3$ and $(M,g)$ be a complete, non-compact $N-$dimensional Riemannian manifold satisfying the curvature condition $(\mathbf{C})$, and ${\displaystyle \inf_{x\in M}\mathrm{Vol}_{g}(B_{x}(1))>0}.$
Let also $\alpha:M\to\mathbb{R}_{+}\setminus\{0\}$ be in $L^{\infty}(M)\cap L^1(M)$,
$f:\mathbb{R}_{+}\to\mathbb{R}_{+}$  a continuous function with
$f(0)=0$ verifying (\ref{growth2}) and $V:M\to\mathbb{R}$ be a
potential verifying $(V_{1})$, $(V_{2})$. Assume that
for some $a>0$, the function ${\displaystyle \xi\to\frac{F(\xi)}{\xi^{2}}}$ is  non-increasing
 in $(0,a]$. Then, the following conditions are equivalent:
\begin{itemize}
\item[$(i)$] for each $b>0$, the function ${\displaystyle \xi\to\frac{F(\xi)}{\xi^{2}}}$ is
not constant in $(0,b]$;
\item[$(ii)$] for each $r>0$, there exists an open interval $I_r\subseteq(0,+\infty)$
such that for every $\lambda\in I_r$, problem $(\mathcal{\mathscr{P}}_{\lambda})$
has a nontrivial solution $u_{\lambda}\in H_{g}^{1}(M)$ satisfying
\[
{\displaystyle \int_{M}\left(|\nabla u_{\lambda}(x)|^{2}+V(x)u_{\lambda}^{2}\right)dv_{g}<r}.
\]
\end{itemize}
\end{thm}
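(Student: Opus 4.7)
\medskip

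\noindent\textbf{Proof plan.} The natural ambient space is $X := H_g^1(M)$ equipped with the equivalent inner product inherited from $\|u\|^2 := \int_M\bigl(|\nabla_g u|^2 + V(x) u^2\bigr)\,dv_g$; thanks to $(V_1)$, $(V_2)$ and the geometric hypotheses, $X$ embeds continuously into $L^p(M)$ for $p\in[2,2^\star]$ and compactly for $p\in[2,2^\star)$. Set
\[
\Phi(u) = \tfrac12\|u\|^2, \qquad \Psi(u) = \int_M \alpha(x)\, F(u^+(x))\,dv_g.
\]
Then $\Phi$ is coercive, strictly convex and sequentially weakly l.s.c., while the growth bound \eqref{growth2} together with the compact embedding makes $\Psi\in C^1(X)$ with sequentially weakly continuous derivative. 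Nonnegative critical points of $\mathcal{E}_\lambda:=\Phi-\lambda\Psi$ are precisely the solutions sought in $(\mathscr{P}_\lambda)$: the truncation $u^+$ and the maximum principle enforce $u\ge 0$, while the decay at infinity follows from $u\in X$ combined with $V\to+\infty$.

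For $(i)\Rightarrow (ii)$ I would invoke a Ricceri-type minimax principle: given $r>0$, if one can exhibit $\overline{u}\in X$ with $\Phi(\overline{u})<r$ whose ratio $\Psi(\overline{u})/\Phi(\overline{u})$ strictly exceeds the supremum of $\Psi/\Phi$ on some lower sublevel set $\{\Phi<\rho\}$, then an open interval $I_r\subset(0,+\infty)$ exists such that $\mathcal{E}_\lambda$ admits a non-trivial local minimum inside $\{\Phi<r\}$ for every $\lambda\in I_r$. The heart of the argument is constructing $\overline{u}$: from (i), together with the monotonicity of $\xi\mapsto F(\xi)/\xi^2$ on $(0,a]$, one extracts $0<\xi_0<\xi_1\le a$ with $F(\xi_0)/\xi_0^2 > F(\xi_1)/\xi_1^2$; the lower bound $\inf_x\mathrm{Vol}_g(B_x(1))>0$ then allows one to place a geodesic ball $B_{x_1}(\rho_0)\subset M$ on which a cutoff-type function with plateau value $\xi_0$ realises the required ratio gap, the radius $\rho_0$ being tuned to match the prescribed $r$.

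For $(ii)\Rightarrow (i)$ I would argue by contraposition. Assume $F(\xi)/\xi^2\equiv c$ on some $(0,b]$, so $f(\xi)=2c\xi$ there, and suppose an interval $I_r$ as in (ii) existed. A standard Nash--Moser iteration adapted to the manifold setting yields a constant $K>0$ such that every weak solution of $(\mathscr{P}_\lambda)$ satisfies $\|u_\lambda\|_{L^\infty(M)}\le K\|u_\lambda\|$; hence for $r$ small enough the pointwise range of $u_\lambda$ lies inside $(0,b]$ and the equation collapses to the linear weighted spectral problem
\[
-\Delta_g u_\lambda + V(x)u_\lambda = 2c\lambda\,\alpha(x)\,u_\lambda \quad\text{in }M.
\]
The operator $-\Delta_g+V$ has compact resolvent on $X$ (by $(V_1)$--$(V_2)$ and the compact embedding), and multiplication by $\alpha\in L^\infty(M)\cap L^1(M)$ is a compact perturbation, so $\mu=1/(2c\lambda)$ must belong to the discrete spectrum of a self-adjoint operator with compact resolvent; thus the admissible $\lambda$'s form a countable set, contradicting $I_r$ being a non-empty open interval.

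The central obstacle is the construction of the test function $\overline{u}$ in the direction $(i)\Rightarrow(ii)$: it must simultaneously remain in $\{\Phi<r\}$ for arbitrarily small $r$, keep its values within the window $(0,a]$ where the monotonicity hypothesis is active, and beat the supremum of $\Psi/\Phi$ over strictly smaller sublevel sets. The role of the monotonicity of $F/\xi^2$ is precisely to control that supremum in terms of the pointwise values of $F/\xi^2$ near $0$, while condition (i) produces the strict gap needed to open the admissible parameter interval.
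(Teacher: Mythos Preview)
Your argument for $(ii)\Rightarrow(i)$ is essentially the paper's: contraposition, Nash--Moser to force $\|u_\lambda\|_\infty\le b$ once $r$ is small, linearisation to a weighted eigenvalue problem, and discreteness of the spectrum to contradict the openness of $I_r$. That part is fine.

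The gap is in $(i)\Rightarrow(ii)$. The Ricceri-type principle you invoke requires a test function $\overline{u}$ whose ratio $\Psi(\overline{u})/\Phi(\overline{u})$ \emph{strictly exceeds} the supremum of $\Psi/\Phi$ on some smaller sublevel set. But under the standing hypothesis that $\xi\mapsto F(\xi)/\xi^{2}$ is non-increasing on $(0,a]$, that supremum is \emph{independent of the sublevel}: writing $\sigma_{1}=\lim_{\xi\to 0}F(\xi)/\xi^{2}$ and $\lambda_{\alpha}=\inf\{\|u\|_{V}^{2}:\ \int_M\alpha u^{2}=1\}$, one has $\Psi(u)/\Phi(u)\le 2\sigma_{1}/\lambda_{\alpha}$ for every $u\neq 0$, while testing along $t\varphi_{\alpha}$ with $t\to 0$ (where $\varphi_{\alpha}$ is the first weighted eigenfunction) shows this bound is sharp on \emph{every} ball $B_{\rho}$. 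Hence no cutoff plateau at height $\xi_{0}$---which can at best produce a ratio proportional to $F(\xi_{0})/\xi_{0}^{2}\le\sigma_{1}$---will ever beat the supremum on a smaller sublevel. The comparison you set up is therefore vacuous. The paper circumvents this by applying a \emph{different} abstract tool of Ricceri (its Theorem~C), whose hypothesis is not a ratio gap but the strict inequality $J'(\hat{u})(\hat{u})<2J(\hat{u})$ at the maximiser $\hat{u}$ of $J(u)=\int_M\alpha\tilde{F}(u)$ on the sphere $\|u\|_{V}^{2}=r$; this inequality follows because $\tilde f(\xi)\xi\le 2\tilde F(\xi)$ everywhere, and equality on all of $[0,\|\hat{u}\|_{\infty}]$ would force $F/\xi^{2}$ to be constant there, contradicting $(i)$. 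One then needs a further step---absent from your sketch---to pass from the truncated problem back to $(\mathscr{P}_\lambda)$: the paper shows $\|u_{\lambda}\|_{\infty}\to 0$ as $\lambda\to(\lambda_{\alpha}/2\sigma_{1})^{+}$, so that for $\lambda$ in a suitable subinterval the truncation is inactive.

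A secondary issue: your claim that ``the decay at infinity follows from $u\in X$ combined with $V\to+\infty$'' is not correct; membership in $H^{1}_{V}(M)$ gives no pointwise decay. The paper establishes both $u_{\lambda}\in L^{\infty}(M)$ and $u_{\lambda}(x)\to 0$ as $d_{g}(x_{0},x)\to\infty$ via the Nash--Moser iteration (its Theorem~3.1), and this regularity is used in \emph{both} implications, not only in $(ii)\Rightarrow(i)$.
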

\begin{rem}
\begin{itemize}
	\item[(a)] One can replace the assumption ${\displaystyle \inf_{x\in M}\mathrm{Vol}_{g}(B_{x}(1))>0}$ with a curvature restriction, requiring that the sectional curvature is bounded from above. Indeed, using the Bishop-Gromov theorem one can easily get that ${\displaystyle \inf_{x\in M}\mathrm{Vol}_{g}(B_{x}(1))>0}$.
	\item[(b)] A more familiar form of Theorem \ref{thm:main} can be obtained when $Ric_{(M,g)}\geq 0$; it suffices to put $H\equiv 0$ in \textbf{(C)}.
\end{itemize}
\end{rem}

\noindent The following potentials  $V$ fulfills  assumptions $(V_1)$ and $(V_2)$:
	\begin{enumerate}
		\item[(i)]  Let $V(x)=d_{g}^{\theta}(x,x_{0})+1$, where $x_{0}\in M$ and $\theta>0$.
		\item[(ii)] More generally, if  $z:[0,+\infty)\to[0,+\infty)$ is a bijective function, with $z(0)=0$, let  $V(x)=z(d_{g}(x,x_{0}))+c,$
		where $x_{0}\in M$ and $c>0.$
	\end{enumerate}
%The main contribution of this paper is motivated by  a recent result of Ricceri:\newpage
The work is motivated by a result of Ricceri (\cite{Ricerri_Characterization}) where a similar theorem is stated for one-dimensional Dirichlet problem; more precisely, $(i)$ from Theorem \ref{thm:main} characterizes the existence of the solutions for the following problem  \[
		\left\{ \begin{array}{ll}
		-u''=\lambda\alpha(x)f(u), & \mbox{in }(0,1)\\
		u>0, & \mbox{in }(0,1)\\
		u(1)=u(0)=0.
		\end{array}\right.
		\]
		In the above theorem it is crucial the embedding of the Sobolev space $H^1_0((0,1))$ into $C^0([0,1])$.
%\begin{thm1}{\cite[Theorem 1]{Ricerri_Characterization}}
%	Let $\alpha, f:[0,1]\to(0,+\infty)$
%	be two continuous functions, with $f(0)=0$. Assume that, for some
%	$a>0$, the function ${\displaystyle \xi\mapsto \frac{F(\xi)}{\xi^{2}}}$
%	is non-increasing in $(0,a]$, where $F$ is a primitive of $f$, i.e. $\ds F(\xi)=\int_{0}^{\xi}f(t)dt$.Then, the following assertions are
%	equivalent:
%	\begin{itemize}
%		\item[$(i)$]  for each $b>0$, the function ${\displaystyle \xi\mapsto \frac{F(\xi)}{\xi^{2}}}$
%		is not constant in $(0,b]$;
%		\item[$(ii)$]  for each $r>0$, there exists an open interval $I\subseteq(0,+\infty)$
%		such that, for every $\lambda\in I$, the problem \[
%		\left\{ \begin{array}{ll}
%		-u''=\lambda\alpha(x)f(u), & \mbox{in }(0,1)\\
%		u>0, & \mbox{in }(0,1)\\
%		u(1)=u(0)=0,
%		\end{array}\right.
%		\]
%		has a solution $u$ satisfying ${\displaystyle \int_{0}^{1}|u'(t)|^{2}dt<r\ .}$
%	\end{itemize}	
%\end{thm1}

Recently, this result has been extended by Anello to higher dimension,
i.e. when the interval $(0,1)$ is replaced by a bounded domain $\Omega\subset\mathbb{R}^{N}$ ($N\in \mathbb N$) with smooth boundary (\cite{Anello}). The generalization
follows by  direct minimization procedures and contains a more precise information on the interval of parameters $I$. See also \cite{BisciRadulescu} for a similar characterization in the framework of fractal sets.

Let us note that in our setting the situation is much more delicate with respect to those treated in the papers \cite{Anello,Ricerri_Characterization}. Indeed, the Riemannian framework produces several technical difficulties that we overcome by
using an appropriate variational formulation.

One of the main tools in our investigation is a recent result by Ricceri
\cite{Ricceri-2014} (see Theorem \ref{D}
in Section \ref{preliminaries}). The main difficulty in the implication
$(i)\Rightarrow(ii)$ in Theorem \ref{thm:main}, consists in proving  the
boundedness of the solutions. To overcome this difficulty we use the
Nash-Moser iteration method adapted to the Riemannian setting.

In proving $(ii)\Rightarrow(i)$, we make use of a recent result by Poupaud \cite{Poupaud2005} (see Theorem \ref{C} in Section \ref{preliminaries}) concerning the discreteness of the
spectrum of the operator $u\mapsto-\Delta_{g}u+V(x)u$. It is worth mentioning that such result was first obtained by  Kondrat\textquoteright ev and Shubin (\cite{Kondratev*Shubin})  for manifolds with bounded geometry and relies on the generalization of Molchanov\textquoteright s
criterion. However, since the bounded geometry property is a strong assumption
and implies the positivity of the radius of injectivity, many efforts have been made for improvement and generalizations. Later, Shen
\cite{Shen-PAMs-2003} characterized the discretness of the spectrum
by using the basic length scale function and the effective potential
function. For further recent studies in this topic, we invite the
reader to consult the papers \cite{CianchiMazya-JDG,CianchiMazya-AMJ,CAG-2016}.

The outline of the paper is as follows. In §2 we present a series
of preparatory definitions and results which are used throughout the
paper. In §3 we prove our main result.

\section{Preliminaries}\label{preliminaries}

\subsection{Elements from Riemannian geometry}

In the sequel, let $N\geq3$ and $(M,g)$ be an $N-$dimens\-ional Riemannian
manifold. Set also $T_{x}M$ its tangent space at $x\in M$, ${\displaystyle TM=\bigcup_{x\in M}T_{x}M}$
 the tangent bundle, and $d_{g}:M\times M\to[0,+\infty)$  the
distance function associated to the Riemannian metric $g$. Let $B_{x}(\rho)=\{y\in M:d_{g}(x,y)<\rho\}$
be the open metric ball with center $x$ and radius $\rho>0$. If
${\text{d}}v_{g}$ is the canonical volume element on $(M,g)$, the
volume of an open bounded set $\Omega\subset M$ is $\mathrm{Vol}{}_{g}(\Omega)=\int_{\Omega}{\text{d}}v_{g}=\mathcal{H}^{N}(\Omega)$,
where $\mathcal{H}^{N}(S)$ denotes the $N-$dimensional Hausdorff
measure of $\Omega$ with respect to the metric $d_{g}$. The manifold
$(M,g)$ has Ricci curvature bounded from below if there exists $h\in\mathbb{R}$
such that ${\rm Ric}_{(M,g)}\geq hg$ in the sense of bilinear forms,
i.e., ${\rm Ric}_{(M,g)}(X,X)\geq h|X|_{x}^{2}$ for every $X\in T_{x}M$
and $x\in M,$ where ${\rm Ric}_{(M,g)}$ is the Ricci curvature,
and $|X|_{x}$ denotes the norm of $X$ with respect to the metric
$g$ at the point $x$.
%In the sequel, $\mathcal{V}^{k}(\rho)$ shall
%denote the volume of a ball of radius $\rho$ in the $N-$dimensional
%simply connected, complete Riemannian manifold of constant sectional
%curvature $k\in\mathbb{R}.$
The behavior of the volume of  geodesic
balls is given by the following theorem (see \cite{GallotHulinLafontain,Pigola-Rigoli-Setti2008}):

\begin{customthm}{\textbf{A}}\label{A}\cite[Corollary 2.17]{Pigola-Rigoli-Setti2008}, \cite{Adriano-Xia2010}. Let $(M,g)$ be an $N-$dimensional
complete Riemannian manifold. %The following statements hold:
%\begin{enumerate}
%\item[(i)]
If $(M,g)$ satisfies the curvature condition $\textbf{(C)}$,  then
the following volume growth property holds true:
\[
\frac{\mathrm{Vol}_{g}(B_{x}(R))}{\mathrm{Vol}_{g}(B_{x}(r))}\leq e^{(N-1)b_{0}}\left(\frac{R}{r}\right)^{N},\ {0}<r<R,
\]
and  $${\displaystyle\mathrm{Vol}_{g}(B_{x}(\rho))\leq e^{(N-1)b_{0}}\omega_{N}}\rho^{N}, \ \rho>0.$$

where $b_0$ is from condition $({\bf C})$.
%\item[(ii)] If the sectional curvature of $(M,g)$ is bounded from above by $k\in\mathbb{R}$,
%then ${\rm Vol}_{g}(B_{x}(\rho))\geq\mathcal{V}^{k}(\rho)$ for every
%$\rho\geq0$ and $x\in M.$
%\end{enumerate}
\end{customthm}

\noindent Let $p>1.$ The norm of $L^{p}(M)$ is given by
\[
\|u\|_{L^{p}(M)}=\left({\displaystyle \int_{M}|u|^{p}\mathrm{d}v_{g}}\right)^{1/p}.
\]
Let $u:M\to\mathbb{R}$ be a function of class $C^{1}.$ If $(x^{i})$
denotes the local coordinate system on a coordinate neighbourhood
of $x\in M$, and the local components of the differential of $u$
are denoted by $u_{i}=\frac{\partial u}{\partial x_{i}}$, then the
local components of the gradient $\nabla_{g}u$ are $u^{i}=g^{ij}u_{j}$.
Here, $g^{ij}$ are the local components of $g^{-1}=(g_{ij})^{-1}$.
In particular, for every $x_{0}\in M$ one has the eikonal equation
\[
|\nabla_{g}d_{g}(x_{0},\cdot)|=1\ {\rm \ on}\ M\setminus\{x_{0}\}.%\label{dist-gradient}
\]
The Laplace-Beltrami operator is given by $\Delta_{g}u={\rm div}(\nabla_{g}u)$
whose expression in a local chart of associated coordinates $(x^{i})$
is
\[
\Delta_{g}u=g^{ij}\left(\frac{\partial^{2}u}{\partial x_{i}\partial x_{j}}-\Gamma_{ij}^{k}\frac{\partial u}{\partial x_{k}}\right),
\]
where $\Gamma_{ij}^{k}$ are the coefficients of the Levi-Civita connection.
The $L^{p}(M)$ norm of $\nabla_{g}u(x)\in T_{x}M$ is given by
\[
\|\nabla_{g}u\|_{L^{p}(M)}=\left({\displaystyle \int_{M}|\nabla_{g}u|^{p}{\rm d}v_{g}}\right)^{\frac{1}{p}}.
\]
The space $H_{g}^{1}(M)$ is the completion of $C_{0}^{\infty}(M)$
with respect to the norm
\[
\|u\|_{H_{g}^{1}(M)}=\sqrt{\|u\|_{L^{2}(M)}^{2}+\|\nabla_{g}u\|_{L^{2}(M)}^{2}}.
\]

\subsection{Variational  tools}

\bigskip

Let us consider the functional space
\[
H_{V}^{1}(M)=\left\{ u\in H_{g}^{1}(M):\im{\left(|\nabla_{g}u|^{2}+V(x)u^{2}\right)}<+\infty\right\}
\]
endowed with the norm
\[
\|u\|_{V}=\left(\im{|\nabla_{g}u|^{2}}+\im{V(x)u^{2}}\right)^{1/2}.
\]

It was proved by Aubin \cite{Aubin}
and independently by Cantor \cite{Cantor} that the Sobolev embedding $H_{g}^{1}(M)\hookrightarrow L^{2^{*}}(M)$ is continuous
for complete manifolds with bounded sectional curvature
and positive injectivity radius. The above result was generalized
(\cite{Hebey-BOOK}) for manifolds with  Ricci curvature bounded from below
and positive injectivity radius.
Taking into account that, if $(M,g)$ is an $N$\textminus dimensional complete non-compact
Riemannian manifold with Ricci curvature bounded from below and positive
injectivity radius, then ${\displaystyle \inf_{x\in M}\mathrm{Vol}_{g}(B_{x}(1))>0}$ (\cite{Croke}), we have the following result:
\begin{customthm}{{\textbf{B}}}\textit{\label{B}\cite{Hebey-BOOK, Varaopoulos} Let $(M,g)$ be
		a  complete, non-compact $N$-dimensional Riemannian manifold such that
		its Ricci curvature is bounded from below and ${\displaystyle \inf_{x\in M}\mathrm{Vol}_{g}(B_{x}(1))>0}.$
		Then the embedding $H_{g}^{1}(M)\hookrightarrow L^{p}(M)$ is continuous
		for $p\in[2,2^{*}].$ }\end{customthm}

It is clear that if $(M,g)$ is a Riemannian manifold satisfying the curvature condition \textbf{(C), }and ${\displaystyle \inf_{x\in M}\mathrm{Vol}_{g}(B_{x}(1))>0}$ then the above theorem holds true.
If $V$ is bounded from below by a positive constant, it is clear that the embedding $H_{V}^{1}(M)\hookrightarrow H_{g}^{1}(M)$ is continuous and thus, the above result is still true replacing $H_{g}^{1}(M)$ with $H_{V}^{1}(M)$.

In order to employ  a variational approach we need the next Rabinowitz-type compactness result (see
Rabinowitz \cite{Rabinowitz92ZAMP}):
\begin{lem}
	Let  $(M,g)$ be a complete, non-compact $N-$dimensional Riemannian manifold
	satisfying the curvature condition $\textbf{(C)}$, and ${\displaystyle \inf_{x\in M}\mathrm{Vol}_{g}(B_{x}(1))>0}.$
	If $V$ satisfies $(V_{1})$ and $(V_{2})$, the embedding $H_{V}^{1}(M)\hookrightarrow L^{p}(M)$
	is compact for all $p\in [2,2^{*})$.
\end{lem}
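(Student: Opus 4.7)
The strategy follows the classical Rabinowitz scheme for $\mathbb{R}^N$: split $M$ into a compact core and an unbounded tail, and treat them separately using the coercivity $(V_2)$ and a local Rellich--Kondrachov-type compactness. The necessary global ingredients, namely the continuous Sobolev embedding $H_g^1(M)\hookrightarrow L^{2^\ast}(M)$ and the compactness of closed metric balls in $M$, are provided by Theorem \ref{B} (via the geometric hypotheses) and by Hopf--Rinow (via the completeness of $M$), respectively.

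Let $(u_n)_n\subset H_V^1(M)$ be a bounded sequence. Being $H_V^1(M)$ a Hilbert space, upon passing to a subsequence one may assume $u_n\rightharpoonup u$ weakly in $H_V^1(M)$, and in particular weakly in $H_g^1(M)$; by Theorem \ref{B} the sequence $(u_n)_n$ is then bounded also in $L^{2^\ast}(M)$. The goal is to show that $u_n\to u$ strongly in $L^p(M)$ for every $p\in[2,2^\ast)$.

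The first step is the tail estimate. Given $\varepsilon>0$, condition $(V_2)$ yields $R=R(\varepsilon)>0$ such that $V(x)\geq 1/\varepsilon$ whenever $d_g(x_0,x)\geq R$. Setting $A_R:=M\setminus \overline{B_{x_0}(R)}$, one has
$$\int_{A_R} |u_n-u|^2\,\mathrm{d}v_g \;\leq\; \varepsilon\int_{A_R} V(x)\,|u_n-u|^2\,\mathrm{d}v_g \;\leq\; \varepsilon\,\|u_n-u\|_V^2\;\leq\;C\varepsilon,$$
so the $L^2$-mass of $u_n-u$ outside the ball is arbitrarily small, uniformly in $n$. The second step is the compactness on the core: by Hopf--Rinow, $\overline{B_{x_0}(R)}$ is compact in $M$, so the classical Rellich--Kondrachov theorem on a compact Riemannian domain gives, along a further subsequence, $u_n\to u$ in $L^2(\overline{B_{x_0}(R)})$. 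Combining the two steps and letting $\varepsilon\to 0$ via a standard diagonal extraction produces $u_n\to u$ strongly in $L^2(M)$.

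Finally, for $p\in(2,2^\ast)$, write $\tfrac{1}{p}=\tfrac{\theta}{2}+\tfrac{1-\theta}{2^\ast}$ for suitable $\theta\in(0,1)$; by H\"older's inequality
$$\|u_n-u\|_{L^p(M)}\;\leq\;\|u_n-u\|_{L^2(M)}^{\theta}\,\|u_n-u\|_{L^{2^\ast}(M)}^{1-\theta}.$$
The $L^{2^\ast}$-factor is uniformly bounded thanks to Theorem \ref{B} applied to the bounded sequence $(u_n)$, and the $L^2$-factor tends to $0$ by the previous step; this gives $u_n\to u$ in $L^p(M)$, completing the proof. The main technical obstacle is genuinely the tail: on a non-compact manifold one has no decay of functions at infinity a priori, and the compactness is entirely driven by the interplay of $(V_2)$ with the uniform Sobolev embedding of Theorem \ref{B}, which in turn relies crucially on the geometric hypothesis $\inf_{x\in M}\mathrm{Vol}_g(B_x(1))>0$ together with the curvature bound $(\mathbf{C})$.
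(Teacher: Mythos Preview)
Your proof is correct and follows essentially the same approach as the paper's: both use the Rabinowitz scheme of a tail estimate via the coercivity $(V_2)$, local Rellich--Kondrachov compactness on a large ball, and then H\"older interpolation between $L^2$ and $L^{2^\ast}$ to cover the remaining exponents. The only cosmetic differences are that you invoke Hopf--Rinow and a diagonal extraction explicitly, whereas the paper packages the core step as the compactness of $H_g^1(M)\hookrightarrow L^2_{\mathrm{loc}}(M)$.
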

\begin{proof}
	Let $\{u_{k}\}_{k}\subset H_{V}^{1}(M)$ be a bounded sequence, i.e., $\|u_{k}\|_{V}\leq\eta$ for some $\eta>0.$ Since $H_{V}^{1}(M)\hookrightarrow H_{g}^{1}(M)$ is continuous and $H_{g}^{1}(M)\hookrightarrow L_{{\rm loc}}^{2}(M)$ is compact, we can find  $u\in H_{V}^{1}(M)$ such that $u_k\rightharpoonup u$ in $H^1_g(M)$ and $u_k\rightarrow u$ in $L^2_{\rm loc}(M)$ (up to a subsequence).
	Let $\varepsilon>0$ and choose  $q=q(\varepsilon)>0$ big enough. By $(V_{2})$, there exists $R>0$
	such that $V(x)\geq q$ for every $x\in M\setminus B_{R}(x_{0})$.
	Thus,
	\[
	\int_{M\setminus B_{R}(x_{0})}|u_{k}-u|^{2}\d\leq\frac{1}{q}\int_{M\setminus B_{R}(x_{0})}V(x)|u_{k}-u|^{2}\d\leq\frac{(\eta+\|u\|_{V})^{2}}{q}<\frac{\varepsilon}{2}.
	\]
	On the other hand, for $k$ big enough
	\[
	\int_{ B_{R}(x_{0})}|u_{k}-u|^{2}\d<\frac{\varepsilon}{2},
	\]
	and we deduce at once  that $u_{k}\to u$ in $L^{2}(M)$.
	Now, let $p\in(2, 2^*)$ and $\theta=\frac{N}{p}\left(1-\frac{p}{2^{*}}\right) $. It is
	clear that $\theta\in(0,1).$ Then, using  Hölder inequality one
	can see that for $u\in H^1_g(M)$,
	\[
	\im{|u|^{p}}=\im{|u|^{\theta p}\cdot|u|^{(1-\theta)p}}\leq\left(\im{\left(|u|^{\theta p}\right)^{\frac{2}{\theta p}}}\right)^{\frac{\theta p}{2}}\cdot\left(\im{\left(|u|^{(1-\theta)p}\right)^{\frac{2^{*}}{(1-\theta)p}}}\right)^{(1-\theta)\frac{p}{2^{*}}},
	\]
	or
	\[
	\|u\|_{L^{p}(M)}\leq\|u\|_{L^{2}(M)}^{\theta}\cdot\|u\|_{L^{2^{*}}(M)}^{1-\theta}.
	\]
	Thus,
	\[
	 \|u_{k}-u\|_{L^{p}(M)}\leq\|u_{k}-u\|_{L^{2^{*}}(M)}^{1-\theta}\|u_{k}-u\|_{L^{2}(M)}^{\theta}\leq\mathcal{C}\|\nabla_{g}(u_{k}-u)\|_{L^{2}(M)}^{1-\theta}\|u_{k}-u\|_{L^{2}(M)}^{\theta},
	\]
being	$\mathcal{C}>0$ the embedding constant of $H^1_g(M)\hookrightarrow L^{2^*}(M)$  . Therefore,
	$u_{k}\to u$ in $L^{p}(M)$.
\end{proof}

To prove our main results we use the following abstract result due to
Ricceri (the same exploited in \cite{Ricerri_Characterization} for
the study of the one-dimensional case):

\begin{customthm}{\textbf{C}}\textit{\label{D}\cite[Theorem A] {Ricceri-2014} Let $(X,\langle\cdot,\cdot\rangle)$
be a real Hilbert space, $J:X\to\mathbb{R}$ a sequentially weakly
upper semicontinuous and Gâteaux differentiable functional, with $J(0)=0$.
Assume that, for some $r>0$, there exists a global maximum $\hat{x}$
of the restriction of $J$ to $B_{r}=\{x\in X:\|x\|^{2}\leq r\}$
such that
\begin{equation}
J'(\hat{x})(\hat{x})<2J(\hat{x}).\label{condition 1}
\end{equation}
Then, there exists an open interval $I\subseteq(0,+\infty)$ such
that, for each $\lambda\in I$, the equation $x=\lambda J'(x)$ has
a non-zero solution with norm less than $r$.} \end{customthm}

As it was already pointed out in \cite{Ricerri_Characterization}, the
following remark adds some crucial information about the interval
$I$:
\begin{rem}
\label{I}Set
$\ds
\beta_{r}=\sup_{B_{r}}J,\ \ \delta_{r}=\sup_{x\in B_{r}\setminus\{0\}}\frac{J(x)}{\|x\|^{2}}
$
and
$\ds
\eta(s)=\sup_{y\in B_{r}}\frac{r-\|y\|^{2}}{s-J(y)},\ \ \mbox{for all}\ s\in\left(\beta_{r},+\infty\right).
$
Then, $\eta$ is convex and decreasing in $]\beta_{r},+\infty[$.
Moreover,
$\ds
I=\frac{1}{2}\eta(\left(\beta_{r},r\delta_{r}\right)).
$
\end{rem}

\subsection{On the spectrum of $-\Delta_{g}+V(x)$}

In this subsection we recall a key tool  on  the discreteness of the spectrum of the
operator $u\mapsto-\Delta_{g}u+V(x)u$ which we state in a conveniente form for our purposes:

\begin{customthm}{{\textbf{D}}}\label{C}\cite[Corollary 0.1]{Poupaud2005}\textit{ Let $(M,g)$ be a
		complete, non-compact $N$-dimensional Riemannian manifold.
		Let $V:M\to\mathbb{R}$ be a potential verifying $(V_{1})$,
		$(V_{2})$. Assume the following on the manifold $M$:}
	\begin{itemize}
		\item[($A_{1}$)] \textit{there exists $r_{0}>0$ and $C_{1}>0$ such that for any
			$0<r\leq\frac{r_{0}}{2}$, one has $\mathrm{Vol}_{g}(B_{x}(2r))\leq C_{1}\mathrm{Vol}_{g}(B_{x}(r))$ (doubling property);}
		\item[($A_{2}$)] \textit{there exists $q>2$ and $C_{2}>0$ such that for all balls $B_{x}(r)$,
			with $r\leq\frac{r_{0}}{2}$ and for all $u\in H_{g}^{1}(B_{x}(r))$
			\[
			\left(\int_{B_{x}(r)}\left|u-u_{B_{x}(r)}\right|^{q}\mathrm{d}v_{g}\right)^{\frac{1}{q}}\leq C_{2}r\mathrm{Vol}_{g}(B_{x}(r))^{\frac{1}{q}-\frac{1}{2}}\left(\int_{B_{x}(r)}|\nabla_{g}u|^{2}\mathrm{d}v_{g}\right)^{\frac{1}{2}},
			\]
			where $u_{B_{x}(r)}={\displaystyle \frac{1}{\mathrm{Vol}_{g}(B_{x}(r))}\int_{B_{x}(r)}u\mathrm{d}v_{g}}$\qquad(Sobolev- Poincaré inequality).}
	\end{itemize}
	 Then the spectrum of the operator  $-\Delta_{g}+V(x)$
		is discrete.
	
\end{customthm}

It is clear that in our setting condition $(A_1)$ holds (see Theorem \ref{A}).
It was proved by  Maheux
and Saloff-Coste (see for instance \cite{Maheux-Saloff-Coste,Hajlasz-Koskela-2}) that  the Sobolev- Poincaré inequality is true
for complete non-compact Riemannian
manifolds with Ricci curvature bounded from below, thus  Theorem \ref{C} is valid
 for Riemannian manifolds satisfying the curvature condition \textbf{(C)}.

\section{Proof of the main result}

The energy functional associated to problem $(\mathscr{P}_{\lambda})$
is the functional $\mathcal{E}:H^1_{V}\to\mathbb{R}$ defined by
\[
\mathcal{E}(u)=\frac{1}{2}\|u\|_{V}^{2}-\lambda\int_{M}\alpha(x)F(u)\d,
\]
which is of class $C^1$ in $H^1_{V}$ with derivative, at any $u\in H^1_{V}$,
given by
\[
\mathcal{E}'(u)(v)=\int_{M}(\nabla_{g}u\nabla_{g}v+V(x)uv)\d-\lambda\int_{M}\alpha(x)f(u)v\d,\ \ \mbox{for all}\ v\in H^1_{V}.
\]
Weak solutions of problem $\Pl$ are precisely critical points of $\mathcal E$.
%It is easy to see that the functional $\mathcal{E}$ is well-defined
%and of class $C^{1}$ on $H_{g}^{1}(M)$.

Because of the sign  of $f$, it is clear that critical points
of $\E$ are non negative functions. More properties of critical points
of $\E$ can be deduced by the following regularity theorem which
is crucial in the proof of the Theorem \ref{thm:main}. We adapt to our setting the classical Nash Moser iteration techniques.
\begin{thm}
\label{regularity}Let $N\geq3$ and $(M,g)$ be a complete, non-compact
$N-$dimensional Riemannian manifold satisfying the curvature condition $(\mathbf{C})$, and ${\displaystyle \inf_{x\in M}\mathrm{Vol}_{g}(B_{x}(1))>0}.$
Let also $\varphi:M\times\R_{+}\to\R$ be a continuous function with primitive
$\ds \Phi(x,t)=\int_{0}^{t}\varphi(x,\xi)d\xi$ such that, for some constants
$k>0$ and $q\in (2, 2^*)$ one has
\begin{align*}
 & |\varphi(x,\xi)|\leq k(\xi+\xi^{q-1}),\ \ \mbox{for all}\ \xi\geq0,\ \ \mbox{uniformly in }x\in M.
\end{align*}
Let $u\in H^1_{V}(M)$ be a non negative critical point of the functional $\mathcal{G}:H_{V}\to\R$
\[
\mathcal{G}(u)=\frac{1}{2}\|u\|_{V}^{2}-\int_{\R^{N}}\Phi(x,u)\d.
\] and $x_0\in M$.
Then,
\begin{itemize}
	\item[(i)] for every $\rho>0$, $u\in L^\infty(B_{x_0}(\rho))$;
	\item[(ii)] $u\in L^\infty(M)$ and $\ds\lim_{d_{g}(x_{0},x)\to\infty}u(x)=0$.
\end{itemize}

\end{thm}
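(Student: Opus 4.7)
The plan is to adapt the classical Moser iteration to the Riemannian setting, the pivotal ingredient being the Sobolev embedding $H^1_g(M)\hookrightarrow L^{2^{*}}(M)$ of Theorem \ref{B}, valid under $(\mathbf{C})$ and the uniform volume lower bound. Being a critical point of $\mathcal{G}$, the non-negative $u$ satisfies the weak Euler--Lagrange equation
\[
\int_M(\nabla_g u\cdot\nabla_g v+V(x)uv)\,\d=\int_M\varphi(x,u)v\,\d,\qquad\forall\,v\in H^1_V(M),
\]
and the hypothesis $q<2^{*}$ places us in the favourable subcritical regime where a De Giorgi--Moser bootstrap applies.

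For the global bootstrap I fix $\beta\geq 1$, $L>0$, put $u_L=\min\{u,L\}$, and test with the bounded function $v=u\,u_L^{2(\beta-1)}\in H^1_V(M)$. Discarding the non-negative term $\int V u^{2}u_L^{2(\beta-1)}\,\d$ on the left and using the pointwise identity $|\nabla_g(u\,u_L^{\beta-1})|^{2}\leq\beta^{2}u_L^{2(\beta-1)}|\nabla_g u|^{2}$, the Sobolev inequality yields, with $w=u\,u_L^{\beta-1}$,
\[
\|w\|_{L^{2^{*}}}^{2}\leq S^{2}\beta^{2}\int_M\varphi(x,u)\,u\,u_L^{2(\beta-1)}\,\d.
\]
Using $|\varphi(x,\xi)|\leq k(\xi+\xi^{q-1})$, H\"older applied to the critical piece
\[
\int_M u^{q-2}w^{2}\,\d\leq\|u\|_{L^{2^{*}}}^{q-2}\|w\|_{L^{\sigma}}^{2},\qquad\sigma=\frac{2\cdot 2^{*}}{2^{*}-q+2}\in(2,2^{*}),
\]
followed by the interpolation $\|w\|_{L^{\sigma}}^{2}\leq\epsilon\|w\|_{L^{2^{*}}}^{2}+C_{\epsilon}\|w\|_{L^{2}}^{2}$ (with $\epsilon$ small enough to absorb the critical part on the left) produces the reverse-H\"older inequality $\|w\|_{L^{2^{*}}}^{2}\leq C\beta^{\gamma}\|w\|_{L^{2}}^{2}$ for some $\gamma>0$ independent of $\beta$ and $L$. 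Passing to $L\to+\infty$ via Fatou gives $\|u\|_{L^{\beta\cdot 2^{*}}}\leq(C\beta^{\gamma})^{1/(2\beta)}\|u\|_{L^{2\beta}}$, and iterating with $\beta_{k+1}=(2^{*}/2)\beta_{k}$ from $\beta_{0}=1$ the convergent series $\sum_{k}\beta_{k}^{-1}\log(C\beta_{k}^{\gamma})<+\infty$ yields $\|u\|_{L^{\infty}(M)}\leq K\|u\|_{L^{2}(M)}<+\infty$. This proves simultaneously $(i)$ and the $L^{\infty}$-part of $(ii)$.

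For the pointwise decay $u(x)\to 0$ as $d_g(x_0,x)\to+\infty$, I rerun the same iteration \emph{localised} on unit balls $B_{x}(1)$, $x\in M$, inserting a cutoff $\eta\in C_c^{\infty}(B_x(1))$ with $\eta\equiv 1$ on $B_x(1/2)$. The uniform lower bound $\inf_{x\in M}\mathrm{Vol}_g(B_x(1))>0$ together with the Sobolev--Poincar\'e inequality of Maheux--Saloff-Coste recalled after Theorem \ref{C} guarantees a local Sobolev constant independent of $x$; consequently the iteration delivers
\[
\|u\|_{L^{\infty}(B_x(1/2))}\leq K\left(\int_{B_x(1)}\bigl(|\nabla_g u|^{2}+V(x)u^{2}\bigr)\,\d\right)^{1/2}
\]
with $K$ uniform in $x\in M$. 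Since $u\in H^1_V(M)$ and $B_x(1)\subset M\setminus B_{x_0}(d_g(x_0,x)-1)$ once $d_g(x_0,x)>1$, the right-hand side is the tail of a convergent integral and hence tends to $0$, forcing $u(x)\to 0$ at infinity.

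The main obstacle I anticipate is bookkeeping: verifying $\sigma\in(2,2^{*})$ so that the iteration actually improves integrability (this uses exactly $2<q<2^{*}$), making Young's absorption step uniform in $\beta$ with a constant growing only polynomially in $\beta$, and above all securing that the local Sobolev constant in the decay step is genuinely independent of the centre $x$. All three points are underwritten by the curvature hypothesis $(\mathbf{C})$, the potential bound $(V_1)$ and the uniform volume lower bound.
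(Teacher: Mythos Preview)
Your argument is essentially correct and constitutes a legitimate Moser iteration, but it is organised differently from the paper's proof in two respects.

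\medskip
\textbf{Global $L^\infty$ bound.} You run a \emph{cutoff-free} iteration on all of $M$, handling the subcritical piece $\int u^{q-2}w^2\,\d$ by H\"older $+$ interpolation $+$ Young absorption, which produces the reverse-H\"older step $\|u^{\beta}\|_{2^{*}}\le(C\beta^{\gamma})^{1/2}\|u^{\beta}\|_{2}$ and hence $u\in L^{\infty}(M)$ in one shot. The paper keeps a cutoff $\tau$ from the beginning and, using the very same exponent $\gamma=\tfrac{2\cdot 2^{*}}{2^{*}-q+2}$ (your $\sigma$), iterates directly from $L^{\gamma\beta}$ to $L^{2^{*}\beta}$ on a shrinking family of balls, estimating the cutoff-gradient contributions via the volume bound of Theorem~\ref{A}; no absorption is performed. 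Your version is shorter; the paper's version makes the $\rho$-dependence of the constants explicit.

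\medskip
\textbf{Decay at infinity.} You localise on unit balls $B_{x}(1)$ and extract a uniform estimate $\|u\|_{L^{\infty}(B_{x}(1/2))}\le K\|u\|_{L^{2}(B_{x}(1))}$, then use that the tail of $\int_{M}(|\nabla_{g}u|^{2}+Vu^{2})\,\d$ vanishes. This requires only $(V_{1})$ and the uniform Sobolev constant (available since $\eta w$ extended by zero lies in $H^{1}_{g}(M)$). The paper proceeds quite differently: it takes an \emph{exterior} cutoff and uses the coercivity $(V_{2})$ in an essential way, choosing $R$ so large that $V\ge 2k$ on the support of $\tau$, which absorbs the linear part of $\varphi$ and yields $\|u\|_{L^{\infty}(\{d_{g}\ge 2\rho\})}\le C_{0}\|u\|_{L^{2^{*}}(\{d_{g}\ge\rho\})}$ with $C_{0}$ independent of $\rho$. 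Your route is thus slightly more general at this step, since it does not invoke $(V_{2})$.

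\medskip
\textbf{One correction.} Theorem~\ref{B} gives only the embedding $H^{1}_{g}(M)\hookrightarrow L^{2^{*}}(M)$, i.e.\ $\|w\|_{2^{*}}\le S\bigl(\|\nabla_{g}w\|_{2}+\|w\|_{2}\bigr)$, not the pure gradient inequality $\|w\|_{2^{*}}\le S\|\nabla_{g}w\|_{2}$ (which need not hold on these manifolds). Consequently you must not discard $\int_{M}Vu^{2}u_{L}^{2(\beta-1)}\,\d$ entirely: retain $V\ge V_{0}>0$ to supply the missing $\|w\|_{2}^{2}$, after which your display becomes $\|w\|_{2^{*}}^{2}\le S^{2}(\beta^{2}+V_{0}^{-1})\int_{M}\varphi(x,u)\,u\,u_{L}^{2(\beta-1)}\,\d$, and the remainder of your scheme (including the polynomial-in-$\beta$ growth of the absorption constant) goes through unchanged.
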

\begin{proof}
Let $u$ be a critical point of $\mathcal{G}$. Then,
\begin{equation}
\int_{M}\left(\nabla_{g}u\nabla_{g}v+V(x)uv\right)\d=\int_{M}\varphi(x,u)v\d\qquad\mbox{for all}\ v\in H_{V}.\label{var}
\end{equation}
For each $L>0$, define
\[
u_{L}(x)=\left\{ \begin{array}{lll}
u(x) & \mbox{ if \ensuremath{u(x)\leq L}},\\
L & \mbox{ if \ensuremath{u(x)>L}}.
\end{array}\right.
\]
Let also $\tau\in C^{\infty}(M)$ with $0\leq\tau\leq1$.

For $\beta>1$, set $v_{L}=\tau^{2}uu_{L}^{2(\beta-1)}$ and $w_{L}=\tau uu_{L}^{\beta-1}$ which are in  $H^1_{V}(M)$. Thus, plugging $v_{L}$ into \eqref{var},
we get
\begin{equation}
\int_{M}\left(\nabla_{g}u\nabla_{g}v_{L}+V(x)uv_{L}\right)\d=\int_{M}\varphi(x,u)v_{L}\d.\label{one}
\end{equation}
 A direct calculation yields that
\[
\nabla_{g}v_{L}=2\tau\ u\ u_{L}^{2(\beta-1)}\ \nabla_{g}\tau+\tau^{2}\ u_{L}^{2(\beta-1)}\ \nabla_{g}u\ +2(\beta-1)\tau^{2}\ u\ u_{L}^{2\beta-3}\ \nabla_{g}u_{L},
\]
 and
\begin{align}
\begin{split}\int_{M}\nabla_{g}u\nabla_{g}v_{L}\d & =\int_{M}\left[2\tau\ u\ u_{L}^{2(\beta-1)}\ \nabla_{g}u\nabla_{g}\tau+\tau^{2}\ u_{L}^{2(\beta-1)}\ |\nabla_{g}u|^{2}\right]\d\label{zero}\\
 & +\int_{M}2(\beta-1)\tau^{2}\ u\ u_{L}^{2\beta-3}\ \nabla_{g}u\nabla_{g}u_{L}\d\\
 & \geq\int_{M}\left[2\tau\ u\ u_{L}^{2(\beta-1)}\nabla_{g}u\nabla_{g}\tau\ +\tau^{2}u_{L}^{2(\beta-1)}|\nabla_{g}u|^{2}\right]\d,
\end{split}
\end{align}
since
\[
2(\beta-1)\int_{M}\tau^{2}\ u\ u_{L}^{2\beta-3}\nabla_{g}u_{L}\nabla_{g}u\d=\int_{\{u\leq L\}}\tau^{2}\ u_{L}^{2(\beta-1)}|\nabla_{g}u|^{2}\d\geq0.
\]
Notice that
\begin{align*}
|\nabla_{g}w_{L}|^{2} & =u^{2}\ u_{L}^{2(\beta-1)}\ |\nabla_{g}\tau|^{2}+\tau^{2}\ u_{L}^{2(\beta-1)}\ |\nabla_{g}u|^{2}+(\beta-1)^{2}\ \tau^{2}u^{2}\ u_{L}^{2(\beta-2)}|\nabla_{g}u_{L}|^{2}\\
 & +2\tau\ u\ u_{L}^{2(\beta-1)}\ \nabla_{g}\tau\nabla_{g}u+2(\beta-1)\tau\ u^{2}\ u_{L}^{2\beta-3}\nabla_{g}\tau\nabla_{g}u_{L}+2(\beta-1)\ \tau^{2}\ u\ u_{L}^{2\beta-3}\nabla_{g}u\nabla_{g}u_{L}.
\end{align*}
Then, one can observe that
\[
\int_{M}\tau^{2}\ u^{2}\ u_{L}^{2(\beta-2)}|\nabla_{g}u_{L}|^{2}\d=\int_{\{u\leq L\}}\ \tau^{2}\ u_{L}^{2(\beta-1)}|\nabla_{g}u|^{2}\d\leq\int_{M}\ \tau^{2}\ u_{L}^{2(\beta-1)}|\nabla_{g}u|^{2}\d,
\]
 and
\[
\int_{M}\ \tau^{2}u\ u_{L}^{2\beta-3}\nabla_{g}u\nabla_{g}u_{L}\d=\int_{\{u\leq L\}}\ \tau^{2}\ u_{L}^{2(\beta-1)}|\nabla_{g}u|^{2}\d\leq\int_{M}\ \tau^{2}\ u_{L}^{2(\beta-1)}|\nabla_{g}u|^{2}\d,
\]
 and also that
\begin{align*}
2\int_{M}\ \tau\ u^{2}\ u_{L}^{2\beta-3}\ \nabla_{g}\tau\nabla_{g}u_{L}\d & \leq2\int_{M}\tau\ u^{2}\ u_{L}^{2\beta-3}\ |\nabla_{g}\tau|\cdot|\nabla_{g}u_{L}|\d\\
 & =2\int_{M}(\tau\ u\ u_{L}^{\beta-2}\ |\nabla_{g}u_{L}|)\cdot(u\ u_{L}^{\beta-1}\ |\nabla_{g}\tau|)\d\\
 & \leq\int_{M}\tau^2\ u^{2}\ u_{L}^{2(\beta-2)}\ |\nabla_{g}u_{L}|^{2}\d+\int_{M}u^{2}\ u_{L}^{2(\beta-1)}\ |\nabla_{g}\tau|^{2}\d\\
 & \leq\int_{M}\tau^{2}\ u_{L}^{2(\beta-1)}|\nabla_{g}u|^{2}\d+\int_{M}u^{2}\ u_{L}^{2(\beta-1)}\ |\nabla_{g}\tau|^{2}\d.
\end{align*}
Therefore
\begin{eqnarray}
\begin{split}\int_{M}|\nabla_{g}w_{L}|^{2}\d & \leq\int_{M}\ u^{2}\ u_{L}^{2(\beta-1)}|\nabla_{g}\tau|^{2}\d+\beta^{2}\int_{M}\tau^{2}\ u_{L}^{2(\beta-1)}|\nabla_{g}u|^{2}\d+\\
 & +2\int_{M}\tau\ u\ u_{L}^{2(\beta-1)}\ \nabla_{g}\tau\nabla_{g}u\d+2(\beta-1)\int_{M}\tau\ u^{2}\ u_{L}^{2\beta-3}\ \nabla_{g}\tau\nabla u_{L}\d\\
 & \leq\beta\int_{M}u^{2}\ u_{L}^{2(\beta-1)}|\nabla_{g}\tau|^{2}\d+(\beta^{2}+\beta-1)\int_{M}\tau^{2}\ u_{L}^{2(\beta-1)}|\nabla_{g}u|^{2}\d+\label{three}\\
 & +2\int_{M}\tau\ u\ u_{L}^{2(\beta-1)}\ \nabla_{g}\tau\nabla_{g}u\d.
\end{split}
\end{eqnarray}
In the sequel we will need the constant $\gamma=\frac{2 \cdot 2^\star}{2^\star-q+2}$. It is clear that  $2<\gamma<2^\star$. \\
\emph{Proof of $i)$. }Putting together (\ref{zero}), \eqref{three}, with (\ref{one}), recalling that $\beta>1$, and bearing in mind the growth of the function $\varphi$,
we obtain that
\begin{eqnarray*}
\|w_{L}\|_{V}^{2} & = & \int_{M}\left(|\nabla_{g}w_{L}|^{2}+V(x)w_{L}^{2}\right)\d\\
 & \leq & \beta\int_{M}\ u^{2}\ u_{L}^{2(\beta-1)}|\nabla_{g}\tau|^{2}\d+2\beta^{2}\int_{M}\left(\nabla_{g}u\nabla_{g}v_{L}+V(x)\tau^{2}u^{2}\ u_{L}^{2(\beta-1)}\right)\d\\
 & = & \beta\int_{M}\ u^{2}\ u_{L}^{2(\beta-1)}|\nabla_{g}\tau|^{2}\d+2\beta^{2}\int_{M}\left(\nabla_{g}u\nabla_{g}v_{L}+V(x)uv_{L}\right)\d\\
 & = & \beta\int_{M}\ u^{2}\ u_{L}^{2(\beta-1)}|\nabla_{g}\tau|^{2}\d+2\beta^{2}\int_{M}\varphi(x,u)v_{L}\d\\
 & \leq & \beta\int_{M}\ u^{2}\ u_{L}^{2(\beta-1)}|\nabla_{g}\tau|^{2}\d+2\beta^{2}k\int_{M}\left(\tau^{2}u^{2}\ u_{L}^{2(\beta-1)}+\tau^{2}u^{q}u_{L}^{2(\beta-1)}\right)\d\\
 & = & \beta\underbrace{\int_{M}\ u^{2}\ u_{L}^{2(\beta-1)}|\nabla_{g}\tau|^{2}\d}_{I_{1}}+2\beta^{2}k\underbrace{\int_{M}w_{L}^{2}\d}_{I_{2}}+2\beta^{2}k\underbrace{\int_{M}u^{q-2}w_{L}^{2}\d}_{I_{3}}.
\end{eqnarray*}
Let $R,r>0$. In the proof of case $i)$, $\tau$ verifies the further
following properties: $|\nabla\tau|\leq\frac{2}{r}$ and
\[
\tau(x)=\left\{ \begin{array}{lll}
1 & \mbox{ if \ensuremath{d_g(x_0,x)\leq R}},\\
0 & \mbox{ if \ensuremath{d_g(x_0,x)>R+r}}.
\end{array}\right.
\]
Then, applying  Hölder inequality yields that
\begin{eqnarray*}
I_{1} & \leq & \frac{4}{r^{2}}\int_{R\leq d_{g}(x_{0},x)\leq R+r}u^{2}\ u_{L}^{2(\beta-1)}\d\\
 & \leq & \frac{4}{r^{2}}\left(\mathrm{Vol}_{g}\left(A[R,R+r]\right)\right)^{1-\frac{2}{\gamma}}\left(\int_{A[R,R+r]}u^{\gamma}\ u_{L}^{\gamma(\beta-1)}\d\right)^{\frac{2}{\gamma}},
\end{eqnarray*}
where $A[R,R+r]=\left\{ x\in M:\ R\leq d_{g}(x_{0},x)\leq R+r\right\} $. Then, from Theorem \ref{A}, we have that
\begin{align*}
I_{1} & \leq4\omega_{N}^{1-\frac{2}{\gamma}}e^{(N-1)b_{0}\left(1-\frac{2}{\gamma}\right)}\frac{(R+r)^{N(1-\frac{2}{\gamma})}}{r^{2}}\left(\int_{d_{g}(x_{0},x)\leq R+r}u^{\gamma}\ u_{L}^{\gamma(\beta-1)}\d\right)^{\frac{2}{\gamma}}.
\end{align*}
In a similar way, we obtain that
\begin{eqnarray*}
I_{2} & \leq & \int_{d_{g}(x_{0},x)\leq R+r}u^{2}\ u_{L}^{2(\beta-1)}\d\\
 & \leq & \omega_{N}^{1-\frac{2}{\gamma}}e^{(N-1)b_{0}\left(1-\frac{2}{\gamma}\right)}\ (R+r)^{N(1-\frac{2}{\gamma})}\left(\int_{d_{g}(x,x_{0})\leq R+r}u^{\gamma}\ u_{L}^{\gamma(\beta-1)}\d\right)^{\frac{2}{\gamma}},
\end{eqnarray*}
and also that
\begin{eqnarray*}
I_{3} & = & \int_{M}u^{q-2}w_{L}^{2}\d \leq  \left(\int_{M}u^{2^{\star}}\d\right)^{\frac{q-2}{2^{\star}}}\left(\int_{M}w_{L}^{\gamma}\d\right)^{\frac{2}{\gamma}}\\
 & = & \|u\|_{L^{2^{\star}}(M)}^{q-2}\left(\int_{d_{g}(x_{0},x)\leq R+r}u^{\gamma}\ u_{L}^{\gamma(\beta-1)}\d\right)^{\frac{2}{\gamma}}.
\end{eqnarray*}
In the sequel we will use the  notation $\displaystyle \mathscr{J}=\left(\int_{d_{g}(x_{0},x)\leq R+r}u^{\gamma}\ u_{L}^{\gamma(\beta-1)}\d\right)^{\frac{2}{\gamma}}$. Therefore, summing up the above computations, we obtain that
\begin{eqnarray}
\|w_{L}\|_{V}^{2} &\leq &  4\beta\omega_{N}^{1-\frac{2}{\gamma}}e^{(N-1)b_{0}\left(1-\frac{2}{\gamma}\right)} \frac{(R+r)^{N(1-\frac{2}{\gamma})}}{r^{2}}\mathscr{J}+2\beta^{2}k\|u\|_{L^{2^{\star}}(M)}^{q-2}\mathscr{J}\nonumber \\&+&2\beta^{2}k\omega_{N}^{1-\frac{2}{\gamma}}e^{(N-1)b_{0}\left(1-\frac{2}{\gamma}\right)}\ (R+r)^{N(1-\frac{2}{\gamma})}\mathscr{J}.\label{ineq1}
\end{eqnarray}
Moreover, if $C_{\star}$ denotes the embedding constant of $H^1_{V}(M)$, one has
into $L^{2^{\star}}(M)$,
\[
\|w_{L}\|_{V}^{2}\geq C_{\star}\|w_{L}\|_{L^{2^{\star}}(M)}^{2}=C_{\star}\left(\int_{M}(\tau\ u\ u_{L}^{\beta-1})^{2^{\star}}\d\right)^{\frac{2}{2^{\star}}}\geq C_{\star}\left(\int_{d_{g}(x_{0},x)\leq R}(u\ u_{L}^{\beta-1})^{2^{\star}}\d\right)^{\frac{2}{2^{\star}}}.
\]
Combining the above computations with \eqref{ineq1}, and bearing
in mind that $\beta>1$, we get
\begin{eqnarray}\label{ineq2}
\left(\int_{d_{g}(x_{0},x)\leq R}(u\ u_{L}^{\beta-1})^{2^{\star}}\d\right)^{\frac{2}{2^{\star}}} &\leq & 4C_{\star}^{-1}\beta^{2}\omega_{N}^{1-\frac{2}{\gamma}}e^{(N-1)b_{0}\left(1-\frac{2}{\gamma}\right)}\ \frac{(R+r)^{N(1-\frac{2}{\gamma})}}{r^{2}}\mathscr{J}+2kC_{\star}^{-1}\beta^{2}\|u\|_{2^{\star}}^{q-2}\mathscr{J}\nonumber\\ &+&2kC_{\star}^{-1}\beta^{2}\omega_{N}^{1-\frac{2}{\gamma}}e^{(N-1)b_{0}\left(1-\frac{2}{\gamma}\right)}\ (R+r)^{N(1-\frac{2}{\gamma})}\mathscr{J}.
\end{eqnarray}
Taking the limit as $L\to+\infty$ in (\ref{ineq2}), we obtain

\begin{align*}
\left(\int_{d_{g}(x_{0},x)\leq R}u^{2^{\star}\beta}\d\right)^{\frac{2}{2^{\star}}} \leq& 4C_{\star}^{-1}\beta^{2}\omega_{N}^{1-\frac{2}{\gamma}}\ \frac{(R+r)^{N(1-\frac{2}{\gamma})}}{r^{2}}\left(\int_{d_g(x_0,x)\leq R+r}u^{\gamma\beta}\right)^{\frac{2}{\gamma}}+\\ &+2kC_{\star}^{-1}\beta^{2}\omega_{N}^{1-\frac{2}{\gamma}} (R+r)^{N(1-\frac{2}{\gamma})}\left(\int_{d_g(x_0,x)\leq R+r}u^{\gamma\beta}\right)^{\frac{2}{\gamma}}+\\&+2C_{\star}^{-1}\beta^{2}k\|u\|_{2^{\star}}^{q-2}\left(\int_{d_g(x_0,x)\leq R+r}u^{\gamma\beta}\right)^{\frac{2}{\gamma}}.
\end{align*}
Thus, for every $R>0$, $r>0$, $\beta>1$ one has
\begin{equation}\small
\|u\|_{L^{2^{\star}\beta}(d_{g}(x_{0},x)\leq R)}\leq(C_{\star}^{-1})^{\frac{1}{2\beta}}\beta^{\frac{1}{\beta}}\left(C_{1}\ \frac{(R+r)^{N(1-\frac{2}{\gamma})}}{r^{2}}+C_{2}\ (R+r)^{N(1-\frac{2}{\gamma})}+C_{3}\right)^{\frac{1}{2\beta}}\|u\|_{L^{\gamma\beta}(d_{g}(x_{0},x)\leq R+r)},\label{ineq}
\end{equation}
where $C_{1}=4\omega_{N}^{1-\frac{2}{\gamma}}e^{(N-1)b_{0}\left(1-\frac{2}{\gamma}\right)},\ C_{2}=2k\omega_{N}^{1-\frac{2}{\gamma}}e^{(N-1)b_{0}\left(1-\frac{2}{\gamma}\right)},\ C_{3}=2k\|u\|_{L^{2^{*}}(M)}^{q-2}$.

Fix $\rho>0$. We are going to apply (\ref{ineq}) choosing first
${\displaystyle \beta=\frac{2^{\star}}{\gamma},\ R=\rho+\frac{\rho}{2},\ r=\frac{\rho}{2},}$
to get
$${\scriptsize
\|u\|_{L^{2^{\star}\beta}(d_{g}(x_{0},x)\leq\rho+\frac{\rho}{2})}\leq(C_{\star}^{-1})^{\frac{1}{2\beta}}\beta^{\frac{1}{\beta}}\left(C_{1}\ 2^{N(1-\frac{2}{\gamma})}\rho^{N(1-\frac{2}{\gamma})-2}2^{2}+C_{2}\ (2\rho)^{N(1-\frac{2}{\gamma})}+C_{3}\right)^{\frac{1}{2\beta}}\|u\|_{L^{2^{\star}}(d_{g}(x_{0},x)\leq2\rho)}}
$$
Noticing that $\gamma\beta^{2}=2^{\star}\beta$, we can  apply (\ref{ineq})
with $\beta^{2}$ in place of $\beta$ and $R=\rho+\frac{\rho}{2^{2}},\ r=\frac{\rho}{2^{2}}.$
We obtain

\begin{eqnarray*}
\|u\|_{L^{2^{\star}\beta^{2}}(d_{g}(x_{0},x)\leq\rho+\frac{\rho}{2^{2}})}  & \leq & (C_{\star}^{-1})^{\frac{1}{2\beta}+\frac{1}{2\beta^{2}}}\beta^{\frac{1}{\beta}+\frac{2}{\beta^{2}}}e^{\frac{1}{2\beta}\log\left(C_{1}\ 2^{N(1-\frac{2}{\gamma})}\rho^{N(1-\frac{2}{\gamma})-2}2^{2}+C_{2}\ (2\rho)^{N(1-\frac{2}{\gamma})}+C_{3}\right)}\cdot\\
 &  & \cdot e^{\frac{1}{2\beta^{2}}\log\left(C_{1}\ 2^{N(1-\frac{2}{\gamma})}\rho^{N(1-\frac{2}{\gamma})-2}(2^{2})^{2}+C_{2}\ (2\rho)^{N(1-\frac{2}{\gamma})}+C_{3}\right)}\|u\|_{L^{2^{\star}}(d_{g}(x_{0},x)\leq2\rho)}
\end{eqnarray*}
Iterating this procedure, for every integer $n$ we obtain

\begin{eqnarray*}
 &&\|u\|_{L^{2^{\star}\beta^{n}}(d_{g}(x_{0},x)\leq\rho)}\leq\|u\|_{L^{2^{\star}\beta^{n}}(d_{g}(x_{0},x)\leq\rho+\frac{\rho}{2^{n}})}
 \\&& \leq  (C_{\star}^{-1})^{\sum_{i=1}^{n}\frac{1}{2\beta^{i}}}\beta^{\sum_{i=1}^{n}\frac{i}{\beta^{i}}}
 e^{\sum_{i=1}^{n}\frac{\log\left(C_{1}\ 2^{N(1-\frac{2}{\gamma})}\rho^{N(1-\frac{2}{\gamma})-2}2^{2i}
 		+C_{2}(2\rho)^{N(1-\frac{2}{\gamma})}+C_{3}\right)}{2\beta^{i}}}\|u\|_{L^{2^{\star}}(d_{g}(x_{0},x)\leq2\rho)}.
\end{eqnarray*}
If
\[\small
{\displaystyle \sigma=\frac{1}{2}\sum_{n=1}^{\infty}\frac{1}{\beta^{n}}=\frac{1}{2(\beta-1)},\;\vartheta=\sum_{n=1}^{\infty}\frac{n}{\beta^{n}}},\ \eta=\sum_{n=1}^{\infty}\frac{\log\left(C_{1}\ 2^{N(1-\frac{2}{\gamma})}\rho^{N(1-\frac{2}{\gamma})-2}2^{2n}+C_{2}\ (2\rho)^{N(1-\frac{2}{\gamma})}+C_{3}\right)}{2\beta^{n}}.
\]
Passing to the limit as $n\to\infty$, we obtain
\[
\|u\|_{L^{\infty}(d_{g}(x_{0},x)\leq\rho)}\leq(C_{\star}^{-1})^{\sigma}\beta^{\vartheta}e^{\eta}\|u\|_{L^{2^{\star}}(d_{g}(x_{0},x)\leq2\rho)}.
\]
Since $u\in L^{2^{\star}}(M)$, claim $(i)$ follows at once. Notice
that $\eta$ depends on $\rho$.

\noindent \textit{Proof of $(ii)$}. Since $V$ is coercive, we can find $\bar{R}>0$
such that
\[
V(x)\geq2k\qquad\mbox{for}\qquad d_g(x_0,x)\geq\bar{R}
\]
(where $k$ is from the growth of $\varphi$. Without loss of generality
we can assume that $k\geq1$.)

Let $R>\max\{\bar{R},1\}$, $0<r\leq\frac{R}{2}$. In the proof of
case $ii)$, $\tau$ verifies the further following properties: $|\nabla\tau|\leq\frac{2}{r}$
and $\tau$ is such that
\[
\tau(x)=\left\{ \begin{array}{lll}
0 & \mbox{ if \ensuremath{d_g(x_0,x)\leq R}},\\
1 & \mbox{ if \ensuremath{d_g(x_0,x)>R+r}}.
\end{array}\right.
\]
From (\ref{one}), we get
\begin{eqnarray*}
\int_{M}(\nabla_{g}u\nabla_{g}v_{L}+2kuv_{L})\d & = & \int_{d_{g}(x_{0},x)\geq R}(\nabla_{g}u\nabla_{g}v_{L}+2kuv_{L})\d\\&\leq&\int_{d_{g}(x_{0},x)\geq R}(\nabla_{g}u\nabla_{g}v_{L}+V(x)uv_{L})\d\\
 & = & \int_{M}(\nabla_{g}u\nabla_{g}v_{L}+V(x)uv_{L})\d=\int_{M}\varphi(x,u)v_{L}\d\\
 & \leq & k\int_{M}(uv_{L}+u^{q-1}v_{L})\d,
\end{eqnarray*}
thus,
\[
\int_{M}(\nabla_{g}u\nabla_{g}v_{L}+uv_{L})\d\leq\int_{M}(\nabla_{g}u\nabla_{g}v_{L}+kuv_{L})\d\leq k\int_{M}u^{q-1}v_{L}\d.
\]
From (\ref{zero}) and (\ref{three}), and since $w_{L}^{2}=u\cdot v_{L}$,
\begin{eqnarray*}
\int_{M}(|\nabla_{g}w_{L}|^{2}+w_{L}^{2})\d & \leq & \beta\int_{M}\ u^{2}\ u_{L}^{2(\beta-1)}|\nabla_{g}\tau|^{2}\d+2\beta^{2}\int_{M}\nabla_{g}u\nabla_{g}v_{L}\d+\int_{M}uv_{L}\d\\
 & \leq & \beta\int_{M}\ u^{2}\ u_{L}^{2(\beta-1)}|\nabla_{g}\tau|^{2}\d+2\beta^{2}\int_{M}(\nabla_{g}u\nabla_{g}v_{L}+uv_{L})\d\\
 & \leq & \beta\int_{M}\ u^{2}\ u_{L}^{2(\beta-1)}|\nabla_{g}\tau|^{2}\d+2\beta^{2}k\int_{M}u^{q-1}v_{L}\d.
\end{eqnarray*}
Thus,
\[
\|w_{L}\|_{H_g^{1}(M)}^{2}\leq\beta\underbrace{\int_{M}\ u^{2}\ u_{L}^{2(\beta-1)}|\nabla_{g}\tau|^{2}\d}_{I_{1}}+2\beta^{2}k\underbrace{\int_{M}u^{q-2}w_{L}^{2}\d}_{I_{2}}.
\]
As in the proof of $i)$ one has
\begin{eqnarray*}
I_{1} & \leq & 4\omega_{N}^{1-\frac{2}{\gamma}}e^{(N-1)b_{0}\left(1-\frac{2}{\gamma}\right)}\ \frac{(R+r)^{N(1-\frac{2}{\gamma})}}{r^{2}}\left(\int_{d_{g}(x_{0},x)\geq R}u^{\gamma}\ u_{L}^{\gamma(\beta-1)}\d\right)^{\frac{2}{\gamma}},
\end{eqnarray*}
and
\begin{eqnarray*}
I_{2} & \leq & \|u\|_{L^{2^{\star}}(M)}^{q-2}\left(\int_{d_{g}(x_{0},x)\geq R}u^{\gamma}\ u_{L}^{\gamma(\beta-1)}\d\right)^{\frac{2}{\gamma}}.
\end{eqnarray*}
Since,
\[
\|w_{L}\|_{H_g^{1}(M)}^{2}\geq C^{\star}\|w_{L}\|_{L^{2^{\star}}(M)}^{2}=C^{\star}\left(\int_{M}(\tau\ u\ u_{L}^{\beta-1})^{2^{\star}}\d\right)^{\frac{2}{2^{\star}}}\geq C^{\star}\left(\int_{d_{g}(x_{0},x)\geq R+r}(u\ u_{L}^{\beta-1})^{2^{\star}}\d\right)^{\frac{2}{2^{\star}}},
\]
where $C^{\star}$ denotes the embedding constant of $H_g^{1}(M)$ into
$L^{2^{\star}}(M)$, we obtain
\begin{align*}
\left(\int_{d_{g}(x_{0},x)\geq R+r}(u\ u_{L}^{\beta-1})^{2^{\star}}\d\right)^{\frac{2}{2^{\star}}}\leq&4(C^{\star})^{-1}\beta^{2}\omega_{N}^{1-\frac{2}{\gamma}}e^{(N-1)b_{0}\left(1-\frac{2}{\gamma}\right)}\ \frac{(R+r)^{N(1-\frac{2}{\gamma})}}{r^{2}}\cdot \\ &\cdot\left(\int_{d_{g}(x_{0},x)\geq R}u^{\gamma}\ u_{L}^{\gamma(\beta-1)}\d\right)^{\frac{2}{\gamma}}+\\&+2(C^{\star})^{-1}\beta^{2}k\|u\|_{L^{2^{\star}}(M)}^{q-2}\cdot\left(\int_{d_{g}(x_{0},x)\geq R}u^{\gamma}\ u_{L}^{\gamma(\beta-1)}\d\right)^{\frac{2}{\gamma}}.
\end{align*}
Taking the limit as $L\to+\infty$ in the above inequality, we obtain
\begin{align*}
\left(\int_{d_{g}(x_{0},x)\geq R+r}u^{2^{\star}\beta}\d\right)^{\frac{2}{2^{\star}}}\leq& 4(C^{\star})^{-1}\beta^{2}\omega_{N}^{1-\frac{2}{\gamma}}e^{(N-1)b_{0}\left(1-\frac{2}{\gamma}\right)} \frac{(R+r)^{N(1-\frac{2}{\gamma})}}{r^{2}}\left(\int_{d_{g}(x_{0},x)\geq R}u^{\gamma}\ u_{L}^{\gamma(\beta-1)}\d\right)^{\frac{2}{\gamma}}+\\&+2(C^{\star})^{-1}\beta^{2}k\|u\|_{2^{\star}}^{q-2}\left(\int_{d_{g}(x_{0},x)\geq R}u^{\gamma}\ u_{L}^{\gamma(\beta-1)}\d\right)^{\frac{2}{\gamma}}.
\end{align*}
Thus, for every $R>\max\{\bar{R},1\}$, $0<r\leq\frac{R}{2}$, $\beta>1$
one has
\begin{equation}
\|u\|_{L^{2^{\star}\beta}(d_{g}(x_{0},x)\geq R+r)}\leq((C^{\star})^{-1})^{\frac{1}{2\beta}}\beta^{\frac{1}{\beta}}\left(C_{1}\ \frac{(R+r)^{N(1-\frac{2}{\gamma})}}{r^{2}}+C_{2}\right)^{\frac{1}{2\beta}}\|u\|_{L^{\gamma\beta}(d_{g}(x_{0},x)\geq R)}\label{ineq 5},
\end{equation}
where $C_{1}=4\omega_{N}^{1-\frac{2}{\gamma}}e^{(N-1)b_{0}\left(1-\frac{2}{\gamma}\right)},\ C_{2}=2k\|u\|_{L^{2^{\star}}(M)}^{q-2}$.
Fix $\rho>\max\{\bar{R},1\}$. We are going to apply (\ref{ineq 5})
choosing first ${\displaystyle \beta=\frac{2^{\star}}{\gamma},\ R=\rho+\frac{\rho}{2},\ r=\frac{\rho}{2},}$
to get
\[
\|u\|_{L^{2^{\star}\beta}(d_{g}(x_{0},x)\geq2\rho)}\leq((C^{\star})^{-1})^{\frac{1}{2\beta}}\beta^{\frac{1}{\beta}}\left(C_{1}\ 2^{N(1-\frac{2}{\gamma})}\rho^{N(1-\frac{2}{\gamma})-2}2^{2}+C_{2}\right)^{\frac{1}{2\beta}}\|u\|_{L^{2^{\star}}(d_{g}(x_{0},x)\geq\rho+\frac{\rho}{2})}.
\]
Noticing that $\gamma\beta^{2}=2^{\star}\beta$, let us apply (\ref{ineq 5})
with $\beta^{2}$ in place of $\beta$ and $R=\rho+\frac{\rho}{2^{2}},\ r=\frac{\rho}{2^{2}},$
to obtain

\begin{eqnarray*}
\|u\|_{L^{2^{\star}\beta^{2}}(d_{g}(x_{0},x)\geq\rho+\frac{\rho}{2})} & \leq & ((C^{\star})^{-1})^{\frac{1}{2\beta^{2}}}\beta^{\frac{2}{\beta^{2}}}\left(C_{1}\ \frac{(\rho+\frac{\rho}{2})^{N(1-\frac{2}{\gamma})}}{\rho^{2}}(2^{2})^{2}+C_{2}\right)^{\frac{1}{2\beta^{2}}}\|u\|_{L^{2^{\star}\beta}(d_{g}(x_{0},x)\geq\rho+\frac{\rho}{2^{2}})}\\
 & \leq & ((C^{\star})^{-1})^{\frac{1}{2\beta^{2}}}\beta^{\frac{2}{\beta^{2}}}\left(C_{1}\ 2^{N(1-\frac{2}{\gamma})}\rho^{N(1-\frac{2}{\gamma})-2}(2^{2})^{2}+C_{2}\right)^{\frac{1}{2\beta^{2}}}\|u\|_{L^{2^{\star}\beta}(d_{g}(x_{0},x)\geq\rho+\frac{\rho}{2^{2}})}
\end{eqnarray*}
Thus, combining the previous two inequalities we get
\begin{eqnarray*}
\|u\|_{L^{2^{\star}\beta}(d_{g}(x_{0},x)\geq2\rho)} & \leq & ((C^{\star})^{-1})^{\frac{1}{2\beta}+\frac{1}{2\beta^{2}}}\beta^{\frac{1}{\beta}+\frac{2}{\beta^{2}}}e^{\frac{1}{2\beta}\log\left(C_{1}\ 2^{N(1-\frac{2}{\gamma})}\rho^{N(1-\frac{2}{\gamma})-2}2^{2}+C_{2}\right)}\cdot\\
 &  & \cdot e^{\frac{1}{2\beta^{2}}\log\left(C_{1}\ 2^{N(1-\frac{2}{\gamma})}\rho^{N(1-\frac{2}{\gamma})-2}(2^{2})^{2}+C_{2}\right)}\|u\|_{L^{2^{\star}}(d_{g}(x_{0},x)\geq\rho+\frac{\rho}{2^{2}})}
\end{eqnarray*}
Iterating this procedure, for every integer $n$ we obtain

\begin{eqnarray*}
 &  & \|u\|_{L^{2^{\star}\beta^{n}}(d_{g}(x_{0},x)\geq2\rho)}\\
 & \leq & {\displaystyle ((C^{\star})^{-1})^{{\displaystyle \sum_{i=1}^{n}\frac{1}{2\beta^{i}}}}\cdot\beta^{{\displaystyle \sum_{i=1}^{n}\frac{i}{\beta^{i}}}}\cdot e^{{\displaystyle \sum_{i=1}^{n}\frac{\log\left(C_{1}\ 2^{N(1-\frac{2}{\gamma})}\rho^{N(1-\frac{2}{\gamma})-2}2^{2i}+C_{2}\right)}{2\beta^{i}}}}\|u\|_{L^{2^{\star}}(d_{g}(x_{0},x)\geq\rho+\frac{\rho}{2^{n}})}}\\
 & \leq & {\displaystyle ((C^{\star})^{-1})^{{\displaystyle \sum_{i=1}^{n}\frac{1}{2\beta^{i}}}}\cdot\beta^{{\displaystyle \sum_{i=1}^{n}\frac{i}{\beta^{i}}}}\cdot e^{{\displaystyle \sum_{i=1}^{n}\frac{\log\left(C_{1}\ 2^{N(1-\frac{2}{\gamma})}\rho^{N(1-\frac{2}{\gamma})-2}2^{2i}+C_{2}\right)}{2\beta^{i}}}}\|u\|_{L^{2^{\star}}(d_{g}(x_{0},x)\geq\rho)}.}
\end{eqnarray*}
Since $N(1-\frac{2}{\gamma})<2$ and $\rho>1$, one has $\rho^{N(1-\frac{2}{\gamma})-2}<1$,
and the previous estimate implies

\[
\|u\|_{L^{2^{\star}\beta^{n}}(d_{g}(x_{0},x)\geq2\rho)}\leq{\displaystyle ((C^{\star})^{-1})^{{\displaystyle \sum_{i=1}^{n}\frac{1}{2\beta^{i}}}}\cdot\beta^{{\displaystyle \sum_{i=1}^{n}\frac{i}{\beta^{i}}}}\cdot e^{{\displaystyle \sum_{i=1}^{n}\frac{\log\left(C_{1}\ 2^{N(1-\frac{2}{\gamma})}2^{2i}+C_{2}\right)}{2\beta^{i}}}}\|u\|_{L^{2^{\star}}(d_{g}(x_{0},x)\geq\rho)}}.
\]
If
\[
{\displaystyle \sigma=\frac{1}{2}\sum_{n=1}^{\infty}\frac{1}{\beta^{n}}=\frac{1}{2(\beta-1)},\;\vartheta=\sum_{n=1}^{\infty}\frac{n}{\beta^{n}}},\ \zeta=\sum_{n=1}^{\infty}\frac{\log\left(C_{1}\ 2^{N(1-\frac{2}{\gamma})}2^{2n}+C_{2}\right)}{2\beta^{n}},
\] passing to the limit as $n\to\infty$,
we obtain

\[
\|u\|_{L^{\infty}(d_{g}(x_{0},x)\geq2\rho)}\leq C_{0}\|u\|_{L^{2^{\star}}(d_{g}(x_{0},x)\geq\rho)}
\] where $C_{0}=(C^{\star})^{-\sigma}\beta^{\vartheta}e^{\zeta}$
does not depend on $\rho$.
\bigskip{}
Taking into account that $u\in L^{2^{\star}}(M)$, and combining the
above inequality with claim $i)$, we obtain that $u\in L^{\infty}(M)$.
Moreover, as $\displaystyle \lim_{\rho\to\infty}\|u\|_{L^{2^{\star}}(d_g(x_0,x)\geq\rho)}=0,$
we deduce also that $\ds \lim_{d_g(x_0,x)\to\infty}u(x)=0$.
\end{proof}

\bigskip

Now, we consider the following minimization problem:\vspace{0.5cm}\\
\noindent {\bf (M)} $\min\left\{\|u\|_{V}^{2}:\ u\in H^1_{V}(M),\ \|\alpha^{\frac{1}{2}}u\|_{L^{2}(M)}=1\right\}.$ \\

\begin{lem}
Problem \textbf{(M)} has a non negative solution $\varphi_{\alpha}\in L^{\infty}(M)$
such that for every $x_0\in M$, ${\displaystyle \lim_{d_g(x_0,x)\to\infty}\varphi_{\alpha}(x)=0}$.
Moreover, $\varphi_{\alpha}$ is an eigenfunction of the equation
\[
-\Delta_{g}u+V(x)u=\lambda\alpha(x)u,\qquad u\in H^1_{V}(M)
\]
corresponding to the eigenvalue $\|\varphi_{\alpha}\|_{V}^{2}$.
\end{lem}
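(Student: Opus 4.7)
The plan is to combine the direct method of the calculus of variations, a Lagrange multiplier argument, and the Nash--Moser regularity just proved (Theorem \ref{regularity}). First I produce a non-negative weak minimizer by compactness; then I read off the eigenvalue equation from the Lagrange multiplier rule; finally I bootstrap to $L^\infty$ and decay via Theorem \ref{regularity}. The only delicate point is the preservation of the constraint under weak convergence, and this relies crucially on the compact embedding $H^1_V(M)\hookrightarrow L^2(M)$ granted by the coercivity assumption $(V_2)$; without it, a minimizing sequence could concentrate at infinity.

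\textbf{Existence of a non-negative minimizer.} Let $\mu$ denote the infimum in problem $(\mathbf{M})$ and take a minimizing sequence $\{u_n\}\subset H^1_V(M)$. Since the maps $u\mapsto\|u\|_V$ and $u\mapsto\|\alpha^{1/2}u\|_{L^2(M)}$ are invariant under $u\mapsto|u|$, I may assume $u_n\geq 0$. The sequence is bounded in $H^1_V(M)$, so up to a subsequence $u_n\rightharpoonup\varphi_\alpha$ in $H^1_V(M)$ with $\varphi_\alpha\geq 0$. The Rabinowitz-type compactness lemma of Section \ref{preliminaries} then gives $u_n\to\varphi_\alpha$ strongly in $L^2(M)$, and since $\alpha\in L^\infty(M)$ the constraint passes to the limit: $\|\alpha^{1/2}\varphi_\alpha\|_{L^2(M)}=1$. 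Weak lower semicontinuity of $\|\cdot\|_V$ finally yields $\|\varphi_\alpha\|_V^2\leq\liminf\|u_n\|_V^2=\mu$, so $\varphi_\alpha$ realizes the minimum.

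\textbf{Eigenvalue equation.} The constraint map $G(u):=\|\alpha^{1/2}u\|_{L^2(M)}^2-1$ is of class $C^1$ on $H^1_V(M)$, and $G'(\varphi_\alpha)\neq 0$ because $\|\alpha^{1/2}\varphi_\alpha\|_{L^2(M)}=1$. The Lagrange multiplier rule then produces $\lambda\in\mathbb{R}$ such that
$$\int_M\bigl(\nabla_g\varphi_\alpha\cdot\nabla_g v+V\varphi_\alpha v\bigr)\d=\lambda\int_M\alpha\,\varphi_\alpha\,v\d,\qquad\forall\, v\in H^1_V(M).$$
Testing with $v=\varphi_\alpha$ and using the constraint gives $\lambda=\|\varphi_\alpha\|_V^2$, which is exactly the claimed eigenvalue.

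\textbf{Boundedness and decay at infinity.} To apply Theorem \ref{regularity}, set $\varphi(x,\xi):=\lambda\alpha(x)\xi$. Since $\alpha\in L^\infty(M)$, for any fixed $q\in(2,2^*)$ one has, uniformly in $x\in M$,
$$|\varphi(x,\xi)|\leq\lambda\|\alpha\|_{L^\infty(M)}\xi\leq k(\xi+\xi^{q-1}),\qquad\xi\geq 0,$$
with $k=\lambda\|\alpha\|_{L^\infty(M)}$. Since $\varphi_\alpha\geq 0$ is a critical point of the associated functional $\mathcal G$, parts $(i)$ and $(ii)$ of Theorem \ref{regularity} give $\varphi_\alpha\in L^\infty(M)$ and $\varphi_\alpha(x)\to 0$ as $d_g(x_0,x)\to\infty$; by the triangle inequality the decay holds for every base point $x_0\in M$, completing the proof.
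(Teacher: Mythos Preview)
Your proof is correct and follows essentially the same approach as the paper: direct minimization with the constraint preserved via the compact embedding $H^1_V(M)\hookrightarrow L^2(M)$, identification of the eigenvalue equation, and then an appeal to Theorem~\ref{regularity} for $L^\infty$-boundedness and decay. The only cosmetic differences are that the paper takes absolute values after passing to the limit rather than before, and derives the Euler--Lagrange equation by differentiating the Rayleigh quotient rather than invoking the Lagrange multiplier rule on the constraint; these are equivalent formulations of the same argument.
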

\begin{proof}
Notice first that $\alpha^{\frac{1}{2}}u\in L^{2}(M)$ for any $u\in H^1_{V}(M)$.
Fix a minimizing sequence $\{u_{n}\}$ for problem \textbf{(M)}, that
is $\|u_{n}\|_{V}^{2}\to\lambda_{\alpha}$, being
\[
\lambda_{\alpha}=\inf\left\{ \|u\|_{V}^{2}:\ u\in H^1_{V}(M),\|\alpha^{\frac{1}{2}}u\|_{L^{2}(M)}=1\right\} .
\]
Then, there exists a subsequence (still denoted by $\{u_n\}$) weakly converging in $H^1_{V}(M)$ to
some $\varphi_{\alpha}\in H^1_{V}(M)$. By the weak lower semicontinuity
of the norm, we obtain that
\[
\|\varphi_{\alpha}\|_{V}^{2}\leq\liminf_{n}\|u_{n}\|_{V}^{2}=\lambda_{\alpha}.
\]
In order to conclude, it is enough to prove that $\|\alpha^{\frac{1}{2}}\varphi_{\alpha}\|_{L^{2}(M)}=1.$
Since $\{u_{n}\}$ converges strongly to $\varphi_{\alpha}$ in $L^{2}(M)$
and $\alpha\in L^{\infty}(M)$,
\[
\alpha^{\frac{1}{2}}u_{n}\to\alpha^{\frac{1}{2}}\varphi_{\alpha}\qquad\mbox{in}\ L^{2}(M),
\]
thus, by the continuity of the norm, $\|\alpha^{\frac{1}{2}}\varphi_{\alpha}\|_{L^{2}(M)}=1$
and the claim is proved. Clearly, $\varphi_\alpha\neq 0$. Replacing eventually $\varphi_{\alpha}$
with $|\varphi_{\alpha}|$ we can assume that $\varphi_{\alpha}$
is non negative. Equivalently, we can write
\[
\lambda_{\alpha}=\inf_{u\in H^1_{V}(M)\setminus\{0\}}\dfrac{\|u\|_{V}^{2}}{\|\alpha^{\frac{1}{2}}u\|_{L^{2}(M)}^{2}}.
\]
This means that $\varphi_{\alpha}$ is a global minimum of the function
$u\to\dfrac{\|u\|_{V}^{2}}{\|\alpha^{\frac{1}{2}}u\|_{L^{2}(M)}^{2}}$,
hence its derivative at $\varphi_{\alpha}$ is zero, i.e.
\[
\int_{M}(\nabla_{g}\varphi_{\alpha}\nabla_{g}v+V(x)\varphi_{\alpha}v)\d-\|\varphi_{\alpha}\|_{V}^{2}\int_{M}\alpha(x)\varphi_{\alpha}v\d=0\ \mbox{for any}\ v\in H_{V}
\]
(recall that $\|\alpha^{\frac{1}{2}}\varphi_{\alpha}\|_{L^{2}(M)}=1$).
The above equality implies that $\varphi_{\alpha}$ is an eigenfunction
of the problem
\[
-\Delta_{g}u+V(x)u=\lambda\alpha(x)u,\qquad u\in H^1_{V}(M)
\]
corresponding to the eigenvalue $\|\varphi_{\alpha}\|_{V}^{2}$. From
Theorem \ref{regularity}  we also have that $\varphi_{\alpha}$
is a bounded function and ${\displaystyle \lim_{d_{g}(x,x_{0})\to\infty}\varphi_{\alpha}(x)=0}$.
\end{proof}

\bigskip

Now we are in the position to prove our main theorem.

\subsection{Proof of Theorem \ref{thm:main}}
$(i)\Rightarrow(ii)$. 

From the assumption,
we deduce the existence of $\sigma_{1}\in(0,+\infty]$
defined as
\[
\sigma_{1}\equiv\lim_{\xi\to0}\frac{{F}(\xi)}{\xi^{2}}.
\]

Assume first that $\sigma_1<\infty$. 

Define the following continuous truncation
of $f$,

\[
\tilde{f}(\xi)=\left\{ \begin{array}{ll}
0, & \mbox{\mbox{{if\;}}}\xi\in(-\infty,0]\\
\\
f(\xi), & \mbox{}\mbox{{if\;}}\xi\in(0,a]\\
\\
f(a), & \mbox{}\mbox{{if\;}}\xi\in(a,+\infty)
\end{array}\right.
\]
and let $\tilde{F}$ its primitive, that is $\tilde{F}(\xi)=\displaystyle\int_{0}^{\xi}\tilde{f}(t)dt$,
i.e.
\[
\tilde{F}(\xi)=\left\{ \begin{array}{ll}
F(\xi), & \mbox{{if\;}}\xi\in(-\infty,a]\\
\\
F(a)+f(a)(\xi-a), & \mbox{{if\;}}\xi\in(a,+\infty).
\end{array}\right.
\]
Observe that, from the monotonicity assumption on the function $\xi\to\frac{F(\xi)}{\xi^{2}}$,
the derivative of the latter is non-positive, that is
\[
f(\xi)\xi\leq2F(\xi)\qquad\mbox{for all}\ \xi\in[0,a].
\]
This implies
\begin{equation}
\tilde{f}(\xi)\xi\leq2\tilde{F}(\xi)\qquad\mbox{for all}\ \xi\in\R,\label{monotonicity2}
\end{equation}
or that the function $\xi\to\frac{\tilde{F}(\xi)}{\xi^{2}}$ is not
increasing in $(0,+\infty)$. Then, 
\begin{equation}
\sigma_{1}\equiv\lim_{\xi\to0}\frac{{F}(\xi)}{\xi^{2}}=\lim_{\xi\to0}\frac{\tilde{F}(\xi)}{\xi^{2}}=\sup_{\xi>0}\frac{\tilde{F}(\xi)}{\xi^{2}}.\label{sigma1}
\end{equation}
Moreover,
\begin{equation}
\tilde{F}(\xi)\leq\sigma_{1}\xi^{2} \ \ \mbox{and} \ \ \tilde f(\xi)\leq 2\sigma_1\xi, \ \qquad\mbox{for all}\ \xi\in\R\label{monotonicity1}
\end{equation}
Define now the  functional
\[
J:H^1_{V}(M)\to\R,\qquad J(u)=\int_{M}\alpha(x)\tilde{F}(u)\d,
\]
which is well defined, sequentially weakly continuous, Gâteaux differentiable
with derivative given by
\[
 J'(u)(v)=\int_{M}\alpha(x)\tilde{f}(u)v\d\qquad\mbox{for all}\ v\in H^1_{V}(M).
\]
Moreover, $J(0)=0$ and 

\begin{equation}
\sup_{u\in H^1_{V}(M)\setminus\{0\}}\frac{J(u)}{\|u\|_{V}^{2}}=\frac{\sigma_{1}}{\lambda_{\alpha}}.\label{sup}
\end{equation}
Indeed, from (\ref{monotonicity1}) immediately follows that
\[
\frac{J(u)}{\|u\|_{V}^{2}}\leq\frac{\sigma_{1}}{\lambda_{\alpha}}\;\mbox{for every }\ u\in H^1_{V}(M)\setminus\{0\}.
\]
Also, using the monotonicity assumption, for every $t>0,$ and for
every $x\in M$, such that $\varphi_{\alpha}(x)>0$
\[
\frac{\tilde{F}(t\varphi_{\alpha}(x))}{(t\varphi_{\alpha}(x))^{2}}\geq\frac{\tilde{F}(t\|\varphi_{\alpha}\|_{L^{\infty}(M)})}{t^{2}\|\varphi_{\alpha}\|_{L^{\infty}(M)}^{2}},
\]
thus
\begin{eqnarray*}
J(t\varphi_{\alpha}) & = & \int_{\{\varphi_{\alpha}>0\}}\alpha(x)\frac{\tilde{F}(t\varphi_{\alpha})}{(t\varphi_{\alpha})^{2}}(t\varphi_{\alpha})^{2}\d\geq\frac{\tilde{F}(t\|\varphi_{\alpha}\|_{L^{\infty}(M)})}{\|\varphi_{\alpha}\|_{L^{\infty}(M)}^{2}}\int_{M}\alpha(x)\varphi_{\alpha}^{2}\d\\
 & = & \frac{\tilde{F}(t\|\varphi_{\alpha}\|_{L^{\infty}(M)})}{\|\varphi_{\alpha}\|_{L^{\infty}(M)}^{2}}>0.
\end{eqnarray*}
Thus,
\[\frac{J(t\varphi_{\alpha})}{\|t\varphi_\alpha\|^2_V}=
\frac{J(t\varphi_{\alpha})}{t^2\lambda_\alpha}\geq
\frac{\tilde{F}(t\|\varphi_{\alpha}\|_{L^{\infty}(M)})}
{\|t\varphi_{\alpha}\|_{L^{\infty}(M)}^{2}} \frac{1}{\lambda_\alpha}.\]
Passing to the limit as $t\to0^{+}$, from (\ref{sigma1}), condition
(\ref{sup}) follows at once. Let us now apply Theorem \ref{D} with $X=H^1_V(M)$ and $J$ as above. Let
$r>0$ and denote by $\hat{u}$ the global maximum of $J_{|_{B_{x_0}(r)}}$.
We observe that $\hat{u}\neq0$ as $J(t\varphi_{\alpha})>0$ for every
$t$ small enough, thus $J(\hat u)>0$. If $\hat{u}\in$ int$(B_{x_0}(r))$, then, it turns
out to be a critical point of $J$, that is $J'(\hat{u})=0$ and (\ref{condition 1})
is satisfied. If $\|\hat{u}\|_V^{2}=r$, then, from the Lagrange multiplier
rule, there exists $\mu>0$ such that $J'(\hat{u})=\mu\hat{u}$, that
is, $\hat{u}$ is a solution of the equation

\[
-\Delta_{g}u+V(x)u=\frac{1}{\mu}\alpha(x)\tilde{f}(u), \ \mbox{in }M.
\]
Also,  by Theorem \ref{regularity}, $\hat{u}\in L^{\infty}(M)$ and
${\displaystyle \lim_{d_g(x_0,x)\to\infty}\hat{u}(x)=0}$. Condition (\ref{monotonicity2})
implies in addition that
\[
 J'(\hat{u})(\hat{u})-2J(\hat{u})=\int_{M}\alpha(x)[\tilde{f}(\hat{u})\hat{u}-2\tilde{F}(\hat{u})]\d\leq0.
\]
If the latter integral is zero, then, being $\alpha>0$, $\tilde{f}(\hat{u}(x))\hat{u}(x)-2\tilde{F}(\hat{u}(x))=0$
for all $x\in M$, which in turn implies that $\tilde{f}(\xi)\xi-2\tilde{F}(\xi)=0$
for all $\xi\in[0,\|\hat{u}\|_{L^{\infty}(M)}]$, that is, the function
$\xi\to\frac{\tilde{F}(\xi)}{\xi^{2}}$ is constant in the interval
$]0,\|\hat{u}\|_{L^{\infty}(M)}]$. In particular it would be constant
in a small neighborhood of zero which is in contradiction with the
assumption $(i)$. This means that (\ref{condition 1}) is fulfilled
and the thesis applies: there exists an interval $I\subseteq(0,+\infty)$
such that for every $\lambda\in I$ the functional
\[
u\to\frac{\|u\|_{V}^{2}}{2}-\lambda J(u)
\]
has a non-zero critical point $u_{\lambda}$ with $\ds\int_{M}(|\nabla u_{\lambda}|^{2}+V(x)u_{\lambda}^{2})\d<r.$
In particular, $u_{\lambda}$ turns out to be a nontrivial solution of the problem
\[
\left\{ \begin{array}{ll}
-\Delta_{g}u+V(x)u=\lambda\alpha(x)\tilde{f}(u), & \mbox{in }M\\
u\geq0, & \mbox{in }M\\
u\to0, & \mbox{as }d_{g}(x,x_{0})\to\infty.
\end{array}\right.\eqno{(\mathcal{\tilde{P}}_{\lambda})}
\]
From Remark \ref{I}, we know that $\ds I=\frac{1}{2}\left(\eta(r\delta_{r}),\lim_{s\to\beta_{r}}\eta(s)\right)$.
It is clear that
\[
\eta(r\delta_{r})=\sup_{y\in B_{r}}\frac{r-\|y\|_{V}^{2}}{r\delta_{r}-J(y)}\geq\frac{1}{\delta_{r}}
\]
and by the definition of $\delta_{r}$,
\[
\frac{r-\|y\|^{2}}{r\delta_{r}-J(y)}\leq\frac{r-\|y\|_{V}^{2}}{r\delta_{r}-\delta_{r}\|y\|_{V}^{2}}=\frac{1}{\delta_{r}}
\]
for every $y\in B_{r}$. Thus, recalling (\ref{sup}),
\[
\eta(r\delta_{r})=\frac{1}{\delta_{r}}=\frac{\lambda_{\alpha}}{\sigma_{1}}.
\]
Notice also that from Theorem \ref{regularity}, $u_{\lambda}\in L^{\infty}(M)$.
Let us prove that
\[
\lim_{\lambda\to\frac{\lambda_{\alpha}}{2\sigma_{1}}}\|u_{\lambda}\|_{L^\infty(M)}=0.
\]
Fix a sequence $\lambda_{n}\to\left(\frac{\lambda_{\alpha}}{2\sigma_{1}}\right)^{+}$.
Since $\|u_{\lambda_{n}}\|_{V}^{2}\leq r$, $\{u_{\lambda_{n}}\}$
admits a subsequence still denoted by $\{u_{\lambda_{n}}\}$ which
is weakly convergent to some $u_{0}\in B_{x_0}(r)$. Moreover, from the
compact embedding of $H^1_{V}(M)$ in $L^{2}(M)$, $\{u_{\lambda_{n}}\}$
converges (up to a subsequence) strongly to $u_{0}$ in $L^{2}(M)$.
Thus, being $u_{\lambda_{n}}$ a solution of $(\mathcal{P}_{\lambda_{n}})$,
\begin{equation}
\int_{M}(\nabla_{g}u_{\lambda_{n}}\nabla_{g}v+V(x)u_{\lambda_{n}}v)\d=\lambda_{n}\int_{M}\alpha(x)\tilde{f}(u_{\lambda_{n}})v\d\;\qquad \mbox{for all }\ v\in H^1_{V}(M),\label{weak}
\end{equation}
passing to the limit we obtain that $u_{0}$ is a solution of the
equation
\[
-\Delta_{g}u+V(x)u=\frac{\lambda_{\alpha}}{2\sigma_{1}}\alpha(x)\tilde{f}(u)\;\mbox{in }M.
\]
Assume $u_{0}\neq0$. Thus, testing (\ref{weak}) with $v=u_{\lambda_{n}}$,
\[
\|u_{\lambda_{n}}\|_{V}^{2}=\lambda_{n}\int_{M}\alpha(x)\tilde{f}(u_{\lambda_{n}})u_{\lambda_{n}}\d,
\]
and passing to the limit,
\begin{eqnarray*}
\|u_{0}\|_{V}^{2} & \leq & \liminf_{n\to\infty}\|u_{\lambda_{n}}\|_{V}^{2}=\frac{\lambda_{\alpha}}{2\sigma_{1}}\int_{M}\alpha(x)\tilde{f}(u_{0})u_{0}\d\\
 & < & \frac{\lambda_{\alpha}}{\sigma_{1}}\int_{M}\alpha(x)\tilde{F}(u_{0})\d\leq\lambda_{\alpha}\int_{M}\alpha(x)u_{0}^{2}\d\\
 & \leq & \|u_{0}\|_{V}^{2}.
\end{eqnarray*}
The above contradiction implies that $u_{0}=0$, and that ${\displaystyle \lim_{n\to\infty}\|u_{\lambda_{n}}\|_{V}=0}$.
Thus, in particular, because of the embedding into $L^{2^{\star}}(M)$,
we deduce that ${\displaystyle \lim_{n\to\infty}\|u_{\lambda_{n}}\|_{L^{2^{\star}}(M)}=0}$
and from Theorem \ref{regularity}, ${\displaystyle \lim_{n\to\infty}\|u_{\lambda_{n}}\|_{L^{\infty}(M)}=0}$.
Therefore,
\[
\lim_{\lambda\to\frac{\lambda_{\alpha}}{2\sigma_{1}}^{+}}\|u_{\lambda}\|_{L^{\infty}(M)}=0.
\]
This implies that there exists a number $\varepsilon_{r}>0$ such
that for every $\lambda\in\left(\frac{\lambda_{\alpha}}{2\sigma_{1}},\frac{\lambda_{\alpha}}{2\sigma_{1}}+\varepsilon_{r}\right)$,
$\|u_{\lambda}\|_{L^{\infty}(M)}\leq a$. Hence, $u_{\lambda}$ turns
out to be a solution of the original problem $(\mathcal{\mathscr{P}}_{\lambda})$
and the proof of this first case is concluded.

\medskip

Assume now $\sigma_1=+\infty$. The functional 
\[K:H^1_V(M)\to \R, \ K(u)=\int_M \alpha (x)F(u)dv_g.\]
is well defined and sequentially weakly continuous. 
Let $r>0$ and fix $\lambda\in I=\frac{1}{2}\left(0, \frac{1}{\lambda^*}\right)$ where 
\[\lambda^*= \inf_{\|y\|_V^2<r}\frac{\sup_{\|u\|_V^2\leq r}K(u)-K(y)}{r-\|y\|_V^2}\]
(with the convention $\frac{1}{\lambda^*}=+\infty$ if $\lambda^*=0$). 
Denote by $u_\lambda$ the global minimum of the restriction of the functional $\mathcal E$ to  $B_r$. Then, since 
\[\lim_{t\to 0}\frac{K(t \varphi_\alpha)}{\|t \varphi_\alpha\|_V^2}=+\infty,\]  it is easily seen that $\mathcal E(u_\lambda) <0$, therefore, $u_\lambda\neq 0$.
The choice of $\lambda$ implies, via easy computations, that $\|u_\lambda\|_V^2<r$. So, $u_\lambda$ is a critical point of $\mathcal E$, thus  a weak solution of $\Pl$.

\bigskip

\noindent $(ii)\Rightarrow(i)$. We follow the idea of \cite{Anello}. For the sake of completeness we give the details. Assume by contradiction that there
exist two positive constants $b,c$ such that
\[
\frac{F(\xi)}{\xi^{2}}=c\qquad\mbox{for all }\ \xi\in(0,b].
\]
Thus,
\begin{equation}
f(\xi)=2c\xi\qquad\mbox{for all }\ \xi\in[0,b].\label{equality}
\end{equation}
Let $\{r_{n}\}$ be a sequence of positive numbers such that $r_{n}\to0^{+}$.
Then, for every $n\in\mathbb{N}$ there exists an interval $I_{n}$
such that for every $\lambda\in I_{n}$, $(\mathscr{P}_{\lambda})$
has a solution $u_{\lambda,n}$ with $\|u_{\lambda,n}\|_V^{2}<r_{n}$.
Thus,
\[
\lim_{n}\sup_{\lambda\in I_{n}}\|u_{\lambda,n}\|_V=0.
\]
Since $f(\xi)\leq k(\xi+\xi^{q-1})$ for all $\xi\geq0$ (this follows
from the growth assumption (\ref{growth2}) and equality (\ref{equality})),
and being $u_{\lambda,n}$ a critical point of $\E$, from the continuous
embedding of $H^1_{V}(M)$ into $L^{2^{\star}}(M)$ and by Theorem \ref{regularity}
we obtain that
\[
\lim_{n}\sup_{\lambda\in I_{n}}\|u_{\lambda,n}\|_{\infty}=0.
\]
Let us fix $n_{0}$ big enough, such that ${\displaystyle \sup_{\lambda\in I_{n}}\|u_{\lambda,n}\|_{\infty}<b}$.
We deduce that for every $\lambda\in I_{n_{0}}$, $u_{\lambda,n_{0}}$
is a solution of the equation
\[
-\Delta_{g}u+V(x)u=2\lambda c\alpha(x)u,\mbox{ in }M,
\]
against the discreteness of the spectrum of the Schr\"{o}dinger operator
$-\Delta_{g}+V(x)$ established in Theorem \ref{C}.
\qed

\bigskip
\begin{rem}
	Notice that without the growth assumption (\ref{growth2}) the result holds true replacing the norm of the solutions $u_\lambda$ in the Sobolev space with the norm in $L^\infty(M).$
\end{rem}

We conclude the section with a corollary of the main result in the euclidean setting. We propose a more general set of assumption on $V$ which implies both the compactness of the embedding of $H^1_V(\R^N)$ into  and the discreteness of the spectrum of the Schr\"{o}dinger operator \cite{Benci-Fortunato-JMAA}.  Namely, let  $N\geq 3$, $\alpha:\R^N\to \R_+\setminus\{0\}$ be in $L^\infty(\R^N)\cap L^1(\R^N)$, $f:\R_+\to \R_+$ be a continuous function with $f(0)=0$ such that there exist two constants $k>0$ and $q\in(1, 2^\star)$ such that
\begin{equation*}
f(\xi)\leq k(1+\xi^{q-1}) \ \mbox{for all} \ \xi\geq 0.
\end{equation*}

Let also
$V:\R^N\to\R$ be in
$L_{\rm loc}^\infty(\R^N)$, such that  $\rm essinf_{\R^N}V\equiv V_0>0$ and
\[\int_{B(x)} \frac{1}{V(y)}dy \to 0 \qquad \mbox{as}  \ |x|\to \infty,\]
where $B(x)$ denotes the unit ball in $\R^N$ centered at $x$. In particular, if $V$ is a strictly positive ($\inf_{\R^N} V>0$), continuous and coercive function, the above conditions hold true.

\begin{corollary}
	Assume that for some $a>0$ the function $\xi\to \frac{F(\xi)}{\xi^2}$ is non-increasing in $(0,a]$. Then, the following conditions are equivalent:
	\begin{itemize}
		\item [(i)] for each $b>0$, the function $\xi\to \frac{F(\xi)}{\xi^2}$ is not constant in $(0,b]$;
		\item[(ii)] for each $r>0$, there exists an open interval $I\subseteq(0,+\infty)$ such that for every $\lambda\in I$, problem
		$$
		\left\{
		\begin{array}{ll}
		-\Delta u +V(x)u=
		\lambda \alpha(x) f(u), & \hbox{ in } \R^N \\
		u\geq 0, & \hbox{ in } \R^N \\
		u\to0, & \hbox{ as } |x|\to\infty
		\end{array}
		\right.
		$$
		 has a nontrivial solution $u_\lambda\in H^1(\R^N)$ satisfying $\ds \int_{\R^N} \left(|\nabla u_\lambda|^2 +V(x)u_\lambda^2\right)dx<r$.
	\end{itemize}
\end{corollary}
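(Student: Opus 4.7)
The plan is to derive the corollary as a direct transposition of Theorem \ref{thm:main} to the Euclidean setting, the only substantive work being to replace the two analytic ingredients that, under $(V_1)$ and $(V_2)$, were supplied by the Rabinowitz-type compactness lemma and by Theorem \ref{C}.

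First I would observe that $(\mathbb{R}^N, g_{\mathrm{eucl}})$ trivially satisfies condition $(\mathbf{C})$ with $H\equiv 0$ and $b_0 = 0$, and that $\inf_{x\in\mathbb{R}^N}\mathrm{Vol}_g(B_x(1))=\omega_N>0$. So the geometric framework of Theorem \ref{thm:main} is already in place. The weaker assumption on $V$, namely $V\in L^\infty_{\mathrm{loc}}(\mathbb{R}^N)$ with $\mathrm{essinf}\, V=V_0>0$ and $\int_{B(x)}V(y)^{-1}\,dy\to 0$ as $|x|\to\infty$, is precisely the Benci--Fortunato condition which ensures: (a) the compact embedding $H^1_V(\mathbb{R}^N)\hookrightarrow L^p(\mathbb{R}^N)$ for every $p\in[2,2^\star)$ (cf.\ \cite{Benci-Fortunato-JMAA}); and (b) the discreteness of the spectrum of $-\Delta+V$, which follows from (a) by a standard min--max argument.

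Equipped with (a) and (b), the scheme of Theorem \ref{thm:main} transfers almost verbatim. For the implication $(i)\Rightarrow(ii)$, the existence of the first eigenfunction $\varphi_\alpha$ via the minimization problem \textbf{(M)} uses only the compact embedding (a); then Ricceri's Theorem \ref{D} applied to the truncated functional $J$ (or, in the case $\sigma_1=+\infty$, the direct minimization on the ball) furnishes, for each $r>0$, the solutions $u_\lambda$ of the truncated problem $(\tilde{\mathscr{P}}_\lambda)$, and the $L^\infty$-smallness argument $\|u_\lambda\|_{L^\infty}\to 0$ allows us to return to $(\mathscr{P}_\lambda)$. For the implication $(ii)\Rightarrow(i)$, the contradiction is obtained exactly as in the manifold case: if $F(\xi)/\xi^2$ were constant on $(0,b]$, the resulting family $\{u_{\lambda,n}\}$ would produce an infinite cluster in the spectrum of $-\Delta+V$, contradicting (b).

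The main obstacle is the counterpart of Theorem \ref{regularity}, in particular claim $(ii)$, where the pointwise coercivity $V(x)\to+\infty$ was used to absorb the linear term $ku$ into $V(x)u$ on the region $d_g(x_0,x)\ge \bar R$. Under the weaker Benci--Fortunato condition such a pointwise bound is not available, so I would adapt the Moser iteration to unit balls $B(x,1)$ with $|x|\to\infty$: the lower-order integral $\int_{B(x,1)}u^2\,dy$ is controlled by $\|V u\|_{L^\infty}\int_{B(x,1)}V^{-1}\,dy$ together with an $L^\infty$-local bound from claim $(i)$, and the hypothesis $\int_{B(x)}V^{-1}\,dy\to 0$ forces this term to vanish as $|x|\to\infty$. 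The remaining steps of the iteration are driven by the Euclidean Sobolev inequality and proceed as before, yielding both $u_\lambda\in L^\infty(\mathbb{R}^N)$ and $u_\lambda(x)\to 0$ at infinity. Once this regularity is secured, the rest of the proof is formally identical to that of Theorem \ref{thm:main}.
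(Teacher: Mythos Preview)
Your proposal is correct and mirrors the paper's treatment: the paper states the corollary without proof, merely observing that the Benci--Fortunato condition on $V$ yields both the compact embedding $H^1_V(\mathbb{R}^N)\hookrightarrow L^p(\mathbb{R}^N)$ and the discreteness of the spectrum of $-\Delta+V$ (citing \cite{Benci-Fortunato-JMAA}), after which the argument of Theorem~\ref{thm:main} carries over. You reproduce exactly this reduction.

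You go beyond the paper in flagging that part~(ii) of Theorem~\ref{regularity} used the pointwise coercivity $V(x)\to+\infty$, which the Benci--Fortunato hypothesis does not guarantee. This is a legitimate point the paper glosses over. Your proposed fix, however, is more delicate than necessary and the inequality ``$\int_{B(x,1)}u^2\le \|Vu\|_{L^\infty}\int_{B(x,1)}V^{-1}$'' is not quite well-formed as written (it would require uniform control of $Vu^2$, which is circular). A cleaner route in $\mathbb{R}^N$ is to exploit translation invariance: the local Moser estimate of part~(i), run on balls of fixed radius, yields
\[
\|u\|_{L^\infty(B(y,1))}\le C\,\|u\|_{L^{2^\star}(B(y,2))}
\]
with $C$ independent of the center $y$ (the only role of $V$ in that estimate is through the global lower bound $V\ge V_0$ and the embedding $H^1_V\hookrightarrow L^{2^\star}$). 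Since $u\in L^{2^\star}(\mathbb{R}^N)$, the right-hand side tends to $0$ as $|y|\to\infty$, giving the decay without any appeal to coercivity or to $\int_{B(x)}V^{-1}\to 0$.
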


\noindent \textbf{Acknowledgments.} This work was initiated when Cs.
Farkas visited the Department of Mathematics of the University of
Catania, Italy. F.Faraci is member of the Gruppo Nazionale per l'Analisi Matematica, la Probabilit\`{a}
e le loro Applicazioni (GNAMPA) of the Istituto Nazionale di Alta Matematica (INdAM).

%\bibliographystyle{alpha}
%\bibliography{references}
\def\cprime{$'$} \def\cprime{$'$}

\end{document}